\definecolor{NoteColor}{rgb}{1,0,0}
\renewcommand{\textsc}{\textcolor{red}}
\newtheorem{theorem}{\rm\bf Theorem}[section]
\newtheorem{proposition}[theorem]{\rm\bf Proposition}
\newtheorem{lemma}[theorem]{\rm\bf Lemma}
\newtheorem{corollary}[theorem]{\rm\bf Corollary}
\newtheorem*{theorem 1}{\rm\bf Proposition 1}
\newtheorem*{theorem 2}{\rm\bf Proposition 2}
\theoremstyle{definition}
\newtheorem{definition}[theorem]{\rm\bf Definition}
\theoremstyle{remark}
\newtheorem{remark}[theorem]{\rm\bf Remark}
\newtheorem{example}[theorem]{\rm\bf Example}
\def\interieur#1{\mathord{\mathop{\kern 0pt #1}\limits^\circ}}
\title[Timelike Funk and Hilbert geometries]{Timelike Hilbert and Funk geometries}
\author{Athanase Papadopoulos}
\address{Athanase Papadopoulos,  Universit{\'e} de Strasbourg and CNRS,
7 rue Ren\'e Descartes,
 67084 Strasbourg Cedex, France
 and 
  Tata Institute of Fundamental Research, Dr. Homi Bhabha Road, Navy Nagar, Near Navy Canteen, Mandir Marg, Colaba, Mumbai, Maharashtra 400005, India.}
\email{athanase.papadopoulos@math.unistra.fr}
\author{Sumio Yamada}
 \address{Sumio Yamada,
  Gakushuin University, 1-5-1 Mejiro, Toshima, Tokyo, 171-8588, Japan}
 \email{yamada@math.gakushuin.ac.jp}
\date{\today}
\begin{document}

  \begin{abstract}
  
A timelike space is a Hausdorff topological space equipped with a partial order relation $<$ and a distance function $\rho$ satisfying a collection of axioms including a set of compatibility conditions between the partial order relation and the distance function. The distance function is defined only on a subset of the product of the space with itself that contains the diagonal, namely, $\rho(x,y)$ is defined if and only if $x<y$ or $x=y$.  Distances between pairs of distinct points in a triple $x,y,z$, whenever these distances are defined, satisfy the so-called \emph{time inequality}, which is a reverse triangle inequality $\rho(x,y)+\rho(y,z)\leq \rho(z,y)$. 

In the 1960s, Herbert Busemann developed an axiomatic  theory of timelike spaces and of locally timelike spaces. His motivation comes from the geometry underlying the theory of relativity, and he tried to adapt to this setting his geometric theory of metric spaces, namely, his theory of $G$-spaces (geodesic spaces). The classical example he considers is the $n$-dimensional Lorentzian space. Two other interesting  classes of examples of timelike spaces he introduced are the timelike analogues of the Funk and Hilbert geometries. In this paper, we investigate these two geometries, and in doing this, we introduce variants of them, in particular the timelike relative Funk and Hilbert geometries, in the Euclidean and spherical settings. We describe the Finsler infinitesimal structure of each of these geometries (with an appropriate notion of Finsler structure) and we display the interactions among the Euclidean and timelike spherical geometries. In particular, we characterize the de Sitter geometry as a special case of a timelike spherical Hilbert geometry.

{\bf The final version of this paper will appear in \emph{Differential Geometry and its Applications}}
   \medskip

\noindent  \emph{Keywords.---} Timelike space; timelike Hilbert geometry; timelike Funk geometry; 
time inequality; exterior convex geometry; Busemann geometry.
  
  \medskip
  
 \noindent    \emph{AMS classification.---} 53C70, 53C22, 5C10, 53C23, 53C50, 53C45.
  \end{abstract}
     
  \medskip
     
   \maketitle
  
  \tableofcontents

\section{Introduction}\label{intro}
 A timelike space is a Hausdorff topological space $\Omega$ equipped with a partial order relation $<$ and a distance function $\rho$ which plays the role of a metric. 
 The distance $\rho(x,y)$  is defined only for pairs $(x,y)\in \Omega\times \Omega$ satisfying $x\leq y$ (that is, either $x=y$ ot $x<y$) and it satisfies the following three axioms:
 \begin{enumerate}
 \item $\rho (x,x)=0$ for every $x$ in $\Omega$;
 \item $\rho(x,y) > 0$ for every $x$ and $y$ in $\Omega$ such that $x<y$;
 \item  $\rho(x,y)+\rho(y,z)\leq \rho(x,z)$ for all triples of points $x,y,z$ in $\Omega$ satisfying $x<y<z$.
 \end{enumerate}
 The last property is a  reversed triangle inequality. It is called the \emph{time inequality}.
 The distance function is asymmetric in the sense that $\rho(x,y)$ is not necessarily equal to $\rho(y,x)$. (In general, if $\rho(y,x)$ is defined, $\rho(x,y)$ is not defined unless $x=y$.)
 
 The notion of timelike space was introduced by Herbert Busemann in his memoir \emph{Timelike spaces} \cite{B-Timelike}. The distance function $\rho$ and the partial order relation $<$ satisfy an additional set of axioms including  
compatibility conditions with respect to each other. For instance, it is required that every neighborhood of a point $q$ in $\Omega$ contains points $x$ and $y$ satisfying $x<q<y$. This axiom and others, which are complex and numerous, are stated precisely in the memoir \cite{B-Timelike} by Busemann. Even though they are not made explicit in the present paper, in all the cases considered here, they will be satisfied. As a matter of fact, in the present paper, the topological space $\Omega$ will always be a subset of the Euclidean space $\mathbb{R}^n$, the sphere $S^n$ or the hyperbolic space $\mathbb{H}^n$.

The theories of timelike spaces,  timelike $G$-spaces, 
locally timelike spaces and locally timelike $G$-spaces were initiated by Busemann as analogues of his geometric theories of metric spaces and of $G$-spaces that he developed in his book \cite{G} and in other papers and monographs.
The motivation for the study of timelike spaces comes from the geometry underlying the physical theory of relativity. The classical example is the $(3+1)$-dimensional Minkowski space-time, which Busemann generalized, in his paper \cite{B-Timelike}, to the case of general timelike distance functions on finite-dimensional vector spaces which become, under a terminology that we use, \emph{timelike Minkowski spaces}. As other interesting  examples of timelike spaces, Busemann introduced  timelike analogues of the Funk and Hilbert geometries. In the present paper, we investigate several closely interrelated geometries, to which we give the names of timelike Euclidean Funk geometry, timelike Euclidean relative Funk geometry, timelike Euclidean Hilbert geometry,  
timelike hyperbolic Funk geometry,  timelike spherical relative   Funk geometry, and timelike spherical Hilbert geometry.  We establish results concerning their geodesics, their convexity properties and  their infinitesimal structure. We show in particular that they are \emph{ timelike Finsler spaces}. This means that the distance between two points is defined at the infinitesimal level by a timelike norm, that is, there exists a timelike Minkowski structure on the tangent space at each point of our space $\Omega$ such that the distance between two points is the supremum of the set of lengths of piecewise $C^1$ paths joining them, where the length of a piecewise $C^1$ path is defined using the timelike norms on vector spaces. We also give a description of the usual de Sitter space as a special case of a timelike spherical Hilbert geometry.

Busemann's interest, as well as the authors' in the subject, stem from Hilbert's Fourth Problem \cite{Pap-H} where Hilbert proposed a systematic study of metrics defined on subsets of the Euclidean space whose geodesics coincide with the Euclidean line segments. The best known, and most important example of such a metric space is the Beltrami-Klein model of the hyperbolic plane. The hyperbolic  geometry in that context is very much hinged with convex Euclidean geometry.  The aim of the current investigation is to revisit the aspect of convex geometry in the \textit{exterior region} of  convex sets  in the constant curvature spaces. This naturally produces timelike geometries as exemplified by the de Sitter geometry.  

In what follows, we will  set up a collection of necessary tools to capture the geometry of the exterior region of convex sets, and consequently reformulate the basis of timelike geometry in a way  inspired from Busemann's approach in \cite{B-Timelike}, although we differ from him at several points. 

To end this introduction, let us note that geometries like timelike geometry, in which there are naturally defined cones representing the future, where ordered triples of points satisfy  the reverse triangle inequality, and where a particular example is the geometry of the exterior of the hyperbolic disc, are topics that date back to the turn of the twentieth century; cf. Poincar\'e's paper \cite{Poin} which contains a germ of the idea, and the more explicit paper by Eduard Study \cite{Study}. One may also mention the work of A. D. Alexandrov and his school on chronogeometry, see e.g. \cite{Alex1, Alex2}, where axiomatic approaches to the geometry of Minkowski space have been investigated. Busemann's doctoral student J. Beem also worked on the subject \cite{BeemPhd, Beem}.  More recently, the subject of timelike geometry (without the name) has become the object of extensive research among low-dimensional geometers and topologists, see e.g. the works  \cite{DGK, Gold} and the surveys \cite{FS, Sch}. In all these references though, the geometry is associated with conics (or quadrics) in projective space, and not to more general convex sets as in Busemann's timelike geometry. We also refer to the recent work of Minguzzi \cite{Minguzzi}. The passage from a conic to a general convex set is comparable to the passage from hyperbolic geometry to the more general Hilbert geometry. For an exposition of Busemann's theory of timelike spaces, the reader may also refer to \cite{PY1}.

\section{The timelike Euclidean  Funk geometry}\label{Funk}

We first introduce some preliminary notions and establish some basic facts. With few exceptions, we shall use Busemann's notation in \cite{B-Timelike}, and we first recall it. 

Let $K$ be a convex hypersurface in ${\mathbb R}^n$, that is, the boundary of an open (possibly unbounded) convex set $I\subset \mathbb{R}^n$. If $K$ is not a hyperplane, it bounds a unique open convex set $I$, namely, the unique convex connected component of $\mathbb{R}^n\setminus K$. If $K$ is a hyperplane, the two connected components of $\mathbb{R}^n\setminus K$ are both convex (they are open half-spaces of $\mathbb{R}^n$), and in this case we make a choice of one of them. We call the open convex set $I$ associated with $K$ the \emph{interior} of $K$.  We denote the closure $K \cup I$ of $I$ by $\overline{K}$. 

Let  ${\mathcal P}$ be the set of supporting hyperplanes of $K$, that is, the hyperplanes $\pi$ having nonempty intersection with $K$ and such that the open convex set $I$ is contained in one of the two connected components of $\mathbb{R}^n\setminus \pi$. 

We let ${\mathbb P}$ be the set of hyperplanes in $\mathbb{R}^n$ that do not intersect the open convex set $I$.  We have ${\mathbb P} \supset {\mathcal P}$.    

To every element $\pi  \in {\mathbb P}$, we let $H^+_\pi$ be the open half-space bounded by the hyperplane $\pi$ and containing $I$, and $H^-_\pi$ the open half-space bounded by $\pi$ and not containing $I$. 
We have:
\[
I = \cap_{\pi \in {\mathcal P}} H^+_\pi = \cap_{\pi \in {\mathbb P}} H^+_\pi. 
\]
We set  \[\Omega= \mathbb{R}^n\setminus \overline{K}.\]
Then, we also have
\[
\Omega = \cup_{\pi \in {\mathcal P}} H^-_\pi.
\]

For $p$ in $\Omega$, we denote by ${\mathbb P}(p)$ the set of  hyperplanes $\pi \in {\mathbb P}$ that separate the open convex set $I$ from $p$.   
In other words, we have 
\begin{equation}
{\mathbb P}(p) = \{\pi \in {\mathbb P} \,\, | \,\, p \in H^-_\pi \}.
\end{equation} 
We also introduce the set of supporting hyperplanes separating $p$ from $I$,
\begin{equation}
 {\mathcal P}(p) = {\mathbb P}(p)  \cap {\mathcal P}. 
\end{equation}

We define $\tilde  {\mathcal P}(p)$ to be the set of supporting hyperplanes containing $p$.

\begin{definition}[Order relation]
We introduce a partial order relation between points of $\Omega$.  For any two distinct points $p$ and $q$ in $\Omega$, we write
\[
p < q 
\]
if the following three properties are satisfied:
\begin{enumerate}
\item The Euclidean ray $R(p, q)$ from $p$ through $q$ intersects the hypersurface $K$;
 \item $R(p, q)$ does not belong to a supporting hyperplane of $K$;
 \item the closed Euclidean segment $[p,q]$ does not interesect $K$.
 \end{enumerate}
\end{definition}
When $p<q$, we say that $q$ \emph{lies in the future of} $p$ and that $p$ \emph{lies in the past of} $q$ (see Figure \ref{future}).   We write $p\leq q$ if either $p<q$ or $p=q$.

\begin{figure}[!ht] 
\centering
 \psfrag{p}{\small $p$}
  \psfrag{f}{\small $\mathfrak{I}^+ (p)$: the future of $p$} 
 \psfrag{s}{\small $\mathfrak{I}^- (p)$: the past of $p$} 
\includegraphics[width=0.55\linewidth]{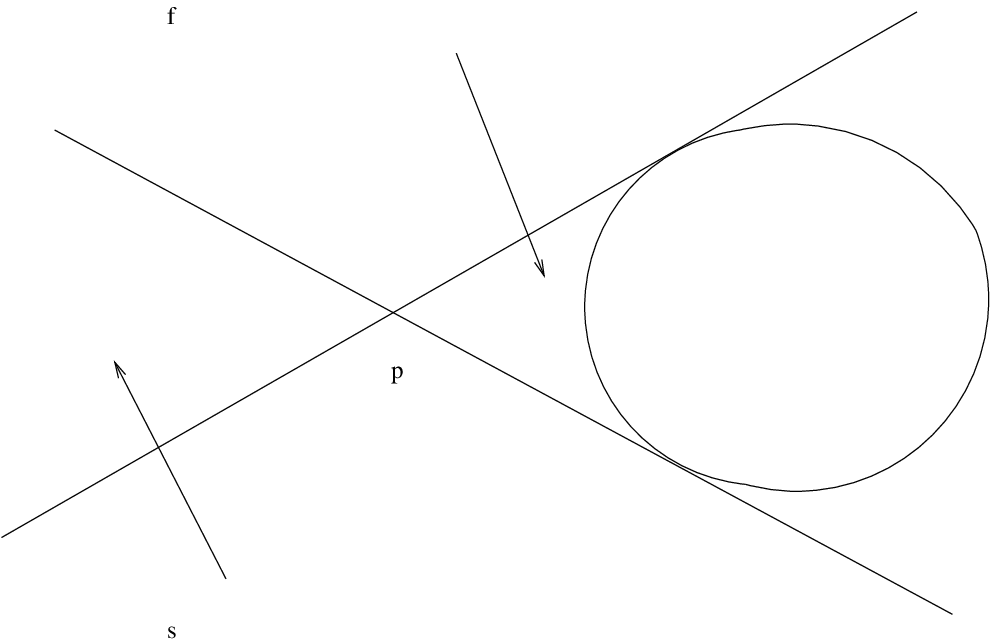}    \caption{\small {The future and the past of a point $p$}}   \label{future}  
\end{figure}

%For any two distinct points $p$ and $q$ in $\Omega$, we 
%write
%\[
% p < q \mbox{  if  }
%q \in C(p),
%\]
%and whenever this occurs, we say that $q$ \emph{lies in the future of} $p$. We also say that $p$ \emph{lies in the past of} $q$ (see Figure \ref{future}).   We write $p\leq q$ if either $p<q$ or $p=q$.
%Geometrically speaking, when $p < q$, the ray $R(p, q)$ from $p$ through $q$ intersects the hypersurface $K$ transversally.   Here a line is said to be transversal to $K$ when the line meets each of the supporting hyperplanes at the point of intersection transversally. 

We denote by $\Omega_<$ (resp. $\Omega_{\leq}$) the set of ordered pairs $(p,q)$ in $\Omega\times\Omega$ 
satisfying $p<q$  (resp. $p\leq q$).   The set  $\Omega_<$ is disjoint from the diagonal set $\{(x, x) \,\,| \,\, x \in 
\Omega\} \subset \Omega\times\Omega$.
   
We define the following sets  that encode properties of timelike geometry.

 \begin{definition}[The future set of a point]
 For $p$ in $\Omega$, its  \emph{future set}, which we denote by  $\mathfrak{I}^+(p)\subset\Omega$,  is the set of points $q\in \Omega$ that satisfy $p<q$. 
 \end{definition}
   
For every point $p$ in $\Omega$, its future set $\mathfrak{I}^+(p)$ is nonempty, open and connected.   
 \begin{definition}[The future set of $p$ in $K$]
 For $p$ in $\Omega$, its  \emph{future set in $K$}, which we denote by $K(p)$, is the set of $k\in K$ such that the open Euclidean segment $]p,k[$ is not contained in any supporting hyperplane of $K$, and $]p,k[\cap K=\emptyset$. 
 \end{definition}

Hence  the future set of $p$ is  the union of the  open line segments $]p, k[$ where $k \in K(p)$:
\[
\mathfrak{I}^+(p)= \cup_{k \in K(p)} ]p, k[.
\]
Let 
\[{\mathbb P}(p)^c = {\mathbb P} \setminus {\mathbb P}(p)\]
where ${\mathbb P}(p)$ is as before the set of hyperplanes that separate the open convex set $I$ from $p$. Hence, for any element $\pi$ in ${\mathbb P}(p)^c$, $p$ either belongs to the hyperplane $\pi$ or to the open half-space $H^+_\pi$ containing $I$. 

The following three observations are easy to prove:

\begin{itemize}
 \item Every closed Euclidean segment $[p, k]$ for $k \in K(p)$ lies in $\overline{H_\pi^+}$ for all $\pi$ in  ${\mathbb P}(p)^c$.  

\item The  intersection  $\cap_{\pi \in {\mathbb P}(p)^c} \overline{H_\pi^+} $ is the closure of the convex hull of $K\cup p$.

\item  We have the following description of the
 the future set $\mathfrak{I}^+(p)$ of $p$:   
\[
\mathfrak{I}^+(p) = {\rm Int}\left(\cap_{\pi \in {\mathbb P}(p)^c} \overline{H_\pi^+}\right)  \setminus \overline{K},
\]
where $\mathrm{Int}(\quad)$ denotes the interior of a set. 
\end{itemize}

\begin{definition}[The future cone of a point]
We call the set  
\[
 \cup_{k \in K(p)} \{ p + t v  \,\, | \,\,  t >0, p + \lambda v = k  \mbox{ for some } \lambda > 0 \mbox{ and } k \in K(p)  \} 
\]
 the \emph{future cone} of $p$, and denote it by $\mathfrak{C}^+(p)$. 
  \end{definition}
  Ths future cone of a point is an open subset of $\mathbb{R}^n$.
  The future set $\mathfrak{I}^+(p)$ of $p$ is naturally a subset of the future cone $\mathfrak{C}^+(p)$ of $p$.

  \begin{definition}[The past set of a point]
 For $p\in \Omega$, the \emph{past set of} $p$, denoted by ${\mathfrak I}^-(p)$,  is the set of points $q$ in $\Omega$ such that $p$ is in the future of $q$.
  \end{definition}
 
 Equivalently, ${\mathfrak I}^-(p)$ is the set
\[
{\rm Int} \Big( \cup_{k \in K(p)} \{ p - t v  \,\, | \,\,  t  >0, p + \lambda v = k \mbox{ for some } \lambda > 0 \mbox{ and } k \in K(p) \} \Big).
\]
It is also an open subset of $\mathbb{R}^n$. It has a natural cone structure, and hence we also call this set the \emph{past cone} of $p$.

The set ${\mathfrak I}^-(p)$   is  also  characterized by the following: 
\begin{equation}\label{char:II}
{\mathfrak I}^-(p) = \mathrm{Int}\left(\cap_{\pi \in {\mathbb P}(p)} \overline{H^-_\pi} \right),
\end{equation}
since each ray $\{p - tv\}$ of the set ${\mathfrak I}^-(p)$ lies entirely in  $\overline{H^-_\pi}$ for all $\pi$ in ${\mathbb P}(p)$.

 We have $$ \overline{{\mathfrak I}^-(p)}  = \cap_{\pi \in {\mathbb P}(p)} \overline{H^-_\pi}.$$
 
 The sets  $ \overline{{\mathfrak I}^-(p)}$ and $\overline{{\mathfrak C}^+(p)}$ are both closed convex cones with common apex $p$.  The boundary cones of these sets are unions of rays, each contained in a supporting hyperplane of $K$ containing $p$, that is, an element of $\tilde {\mathcal P}(p)$.

The following proposition makes a relation between the partial order relation $<$ and the past cones: 
\begin{proposition} \label{2.1}
For any two points $p$ and $q$ in $\Omega$, we have 
\[
p< q \Leftrightarrow \overline{{\mathfrak I}^-(p)} \subset {\mathfrak I}^-(q).
\]
\end{proposition}

\begin{proof} 
As the first step towards proving that $p< q$ $\Rightarrow$  $\overline{{\mathfrak I}^-(p)} \subset {\mathfrak I}^-(q)$, we show the following.

\begin{lemma}\label{lem:7}
For any two points $p$ and $q$ in $\Omega$, we have 
\[
p< q \Rightarrow {\mathbb P}(p) \supset {\mathbb P}(q).
\]
\end{lemma}
\begin{proof}[Proof of the lemma] 
Suppose $p < q$.  We claim that every $\pi \in {\mathbb P}(q)$ is an element of ${\mathbb P}(p)$.  Indeed, $R(p,q)$ intersects $K$ at a point $k\in K(p)$, $k\in H_{\pi}^+$, $q\in H_{\pi}^-$. Therefore, $\pi$ intersects $R(p,q)$ at a unique point in $]q,k[$, so $p\in H_{\pi}^-$ and $\pi\in \mathbb{P}(p)$. 
This proves Lemma \ref{lem:7}.

 \end{proof}
 
 We continue the proof of Proposition \ref{2.1}.
 
 The inclusion ${\mathbb P}(p) \supset {\mathbb P}(q)$ implies 
 \[
  \cap_{\pi \in {\mathbb P}(p)} \overline{H^-_\pi} \subset  \cap_{\pi \in {\mathbb P}(q)} \overline{H^-_\pi},
 \]
 hence $\overline{{\mathfrak I}^-(p)} \subset  \overline{{\mathfrak I}^-(q)}$. 
 
 In order to complete the proof of $p< q \Rightarrow \overline{{\mathfrak I}^-(p)} \subset {\mathfrak I}^-(q)$, it remains to show that any point in $\partial \overline{{\mathfrak I}^-(p)}$ lies in ${\mathfrak I}^-(q)$.  We resort to the following general geometric fact.

\begin{lemma}\label{nesting-cone}
Suppose that  two closed convex cones $C_1$ and $C_2$ in $\mathbb{R}^n$ with respective apices $p_1$ and $p_2$  are nested: $C_1 \subset C_2$. We also assume $p_1 \neq p_2$.  Then, if a point $x \in \partial C_1$ distinct from $p_1$ lies in $\partial C_2$, then $p_1$ also lies in $\partial C_2$.   
\end{lemma}

\begin{proof}[Proof of the lemma]
As $p_1 \in C_2$, the only thing to be checked is that $p_1$ does not lie in the interior of the closed cone $C_2$.  However this cannot be the case, since by definition $p\in\mathcal{I}^-(q)$.
%But this follows easily from the observation that in order for the infinite ray $R(p_1, x)$ to lie in $C_2$, $p_1$ needs to be located on the boundary set $\partial C_2$.  
\end{proof} 

By applying the lemma to the situation where $C_1 = \overline{{\mathfrak I}^-(p)}$ and $C_2 = \overline{{\mathfrak I}^-(q)}$, and $x$ being a point in $\partial \overline{{\mathfrak I}^-(p)}$, we conclude that if 
$\partial \overline{{\mathfrak I}^-(p)}\not\subset{\mathfrak I}^-(q)$, then
$p$ lies in the boundary cone $\partial \overline{{\mathfrak I}^-(q)}$.  However, this cannot be the case, since then $[p,q] \subset\partial \overline{{\mathfrak I}^-(q)}$, which in turn implies that $R(p, q)$ lies in a supporting hyperplane of  $\tilde {\mathcal P}(q)$, contradicting the hypothesis $p< q$.

The implication $p< q \Leftarrow \overline{{\mathfrak I}^-(p)} \subset {\mathfrak I}^-(q)$ is immediate. As the apex $p$ of the closed convex cone $\overline{{\mathfrak I}^-(p)}$ lies in ${\mathfrak I}^-(q)$, $p$ lies in the past of $q$.

\end{proof}

\begin{corollary}\label{p:inclusion}
For any two points $p$ and $q$ in $\Omega$, we have 
\[
p< q \Rightarrow {\mathcal P}(p) \supset {\mathcal P}(q).
\]
\end{corollary}

\begin{proof} 
Let $p<q$. By Lemma \ref{lem:7}, ${\mathbb P}(p) \supset {\mathbb P}(q)$. It follows that $ {\mathcal P}(p) = ({\mathbb P}(p) \cap {\mathcal P}) \supset ({\mathbb P}(q) \cap {\mathcal P}) =  {\mathcal P}(q)$.  
\end{proof}

Note that the strict inclusion in Corollary \ref{p:inclusion} cannot be expected, as observed from the following example in ${\mathbb R}^2$:
\[
K = \{(x, y) \,\, | \,\, y = |x| \}, \,\,\, p=(0, -2) < q=(0, -1)
\]
where we have ${\mathcal P}(p) = { \ \mathcal P}(q) = \{ \mathrm{planes of equation} y = mx \mbox{ with } |m| \leq 1 \}$.

\begin{corollary}\label{cor:three}
 Let $p,q,r$ be three points in $\Omega$. If $p < q$ and $q < r$, then $p < r$.
\end{corollary}

\begin{proof}  Proposition \ref{2.1} gives:
\[
p< q \mbox{ and } q < r \Leftrightarrow  \overline{{\mathfrak I}^-(p)} \subset {\mathfrak I}^-(q)  \subset \overline{{\mathfrak I}^-(q)} \subset {\mathfrak I}^-(r)
\] 
which implies 
\[
 \overline{{\mathfrak I}^-(p)} \subset {\mathfrak I}^-(r),
\]
which in turn says that $p<r$ by applying Proposition \ref{2.1}. 
\end{proof}

\begin{lemma}
If $p < q$ then 
$
{\mathbb P}(p)^c \subset {\mathbb P}(q)^c
$, 
\end{lemma}
\begin{proof}
By Corollary \ref{cor:three}, $p<q$ implies $\mathfrak{I}^+(q) \subset \mathfrak{I}^+(p)$, therefore $\overline{\mathfrak{I}^+(q)} \subset\overline{\mathfrak{I}^+(p)}$. Then the lemma follows from the equality 
 $\overline{\mathfrak{I}^+(p)} \cup \overline{K}= \cap_{\pi \in {\mathbb P}(p)^c} \overline{H_\pi^+}$ 
 and the corresponding equality for $q$.
\end{proof}

We have the following result, analogous to Proposition \ref{2.1}:
\begin{proposition}\label{eq:stronger}
We have the equivalence:
\[
p<q\Leftrightarrow  \mathfrak{I}^+(p) \supset \overline{ \mathfrak{I}^+(q)}.
\]
\end{proposition}
\begin{proof}
First note that for $p<q$, we have \[
\left(\partial {\mathfrak I}^+(p )  \setminus K\right)  \cap \left(\partial {\mathfrak I}^+(q) \setminus K \right) = \emptyset.
\]
This follows from the fact that any element of $\partial {\mathfrak I}^+(q)  \setminus K$
is in the future of $p$. From this, the direct implication follows. The reverse implication is easy.
\end{proof}

Now we define the timelike Funk distance $F(p,q)$ on the subset $\Omega_{\leq}$ of $\Omega\times \Omega$.

\begin{definition}[The timelike Funk distance]\label{def:Funk10}
The function $F(p, q)$ on pairs of  
distinct points $p, q$ in $\Omega$ satisfying $p < q$ is given by the formula
\[
F(p, q) = \log \frac{d(p, b(p,q))}{d(q, b(p,q))}
\]
where $b(p,q)$ is the first intersection point of the ray $R(p,q)$ with $K$. Here,
$d(\cdot \, ,\cdot)$ denotes the Euclidean distance.

Note that the value of $F(p, q)$ is strictly positive. 

We extend the definition of $F(p, q)$ to the case where $p=q$, setting in this case $F(p, q) = 0$.
\end{definition}

%We remark at this point that the metric space together with the partial order relation we have defined satisfies the three axioms $T_1, T'_2$ and $T'_3$ of Busemann's \cite{B-Timelike}. (But to for the purpose of the present paper, the reader need not go through the general axiomatics of Busemann since we are only concerned here with specific examples.) 

Let $p$ and $q$ be two points in $\Omega$ such that $p<q$.  Let $\pi_0$ be a supporting hyperplane to $K$ at $b(p,q)$.  For $x$ in $\mathbb{R}^n$, let $\Pi_{\pi_0}(x)$ be the foot of the Euclidean perpendicular from the point $x$ onto that hyperplane. In other words, $\Pi_{\pi_0}: {\bf R}^n \rightarrow \pi_0$ is the Euclidean nearest point projection map. 
From the similarity of the Euclidean triangles  $\triangle (p, \Pi_{\pi_0}(p), b(p,q))$
and $\triangle (q, \Pi_{\pi_0}(q), b(p,q))$, we have
\[
\log \frac{d(p, b(p,q))}{d(q,  b(p,q))} = \log \frac{d(p, \pi_0)}{d(q,  \pi_0)}.
\]

Using the convexity of $K$, we now give a variational characterization of the quantity $F(p, q)$. 

For any unit vector $\xi$ in $\mathbb{R}^n$ and for any $\pi \in {\mathcal P}(p)$, we set
\[T(p,\xi, \pi)=\pi \cap \{p+t \xi  \,\, | \,\, t > 0\}\] 
if this intersection is non-empty.

For $p <q$ in $\mathbb{R}^n$, consider the vector $\xi = \xi_{pq}= \frac{q-p}{\Vert q-p\Vert}$ where the norm is the Euclidean one.  

 We then have $T(p,\xi_{pq}, \pi_{b(p,q)}) = b(p, q)\in R(p, q) \cap K$.
 
 In the case where $\pi\in\mathcal{P}(q)$ is not a supporting hyperplane of $K$ at $b(p,q)$, 
 the point $T(p, \xi_{pq}, \pi)$ lies outside $\overline{K}$ and,  again by the similarity of the Euclidean  triangles
$\triangle (p, \Pi_{\pi}(p), T(p, \xi_{pq}, \pi))$ and $\triangle (q, \Pi_{\pi}(q), T(p,\xi_{pq}, \pi))$, we get
\[
\frac{d(p, \pi)}{d(q, \pi)} =\frac{d(p, T(p,\xi_{pq},\pi))}{d(q, T(p,\xi_{pq}, \pi))}.
\]
Note  that as $\pi$ varies in ${\mathcal P}(q)$, the farthest point from $p$ on the ray $R(p,q )$ of the form $T(p,\xi_{pq}, \pi)$ is $b(p, q)$, and this occurs when $\pi$ supports $K$ at $b(p, q)$.
This in turn says that  a hyperplane $\pi_{b(p, q)}$ which supports $K$ at $b(p, q)$ minimizes the ratio 
\[
\frac{d(p, T(p,\xi_{pq}, \pi))}{d(q, T(p,\xi_{pq}, \pi))} 
\]
among all the elements of ${\mathcal P}(q)$.
Thus we obtain  
\begin{proposition}\label{prop:star}
 \[F(p,q) = \inf_{\pi \in {\mathcal P}(q)} \log \frac{d(p, \pi)}{d(q,   \pi)}.
 \] 
\end{proposition}

\begin{remark}  There is an analogous formula for the classical (non-timelike) Funk metric, where the infimum in the above formula is replaced by the supremum (see \cite{Y1} Theorem 1.)
\end{remark}

\begin{remark} \label{rem:future} The set $\mathfrak{I}^+(p)$ of future points of a point $p$, that is, the set of points $q$ satisfying $p<q$, reminds us of the cone of future points of some point $p$ in the ambient space of the physically possible trajectories of this point in the case of Minkowski space-time, that is, in the geometric setting of space-time for the theory of (special) relativity. The restriction of the distance function to the cone comes from the fact that a material particle travels at a speed which is less than the speed of light. The set of points on the rays starting at $p$ that are on the boundary $\partial \mathfrak{I}^+ (p)$ of the future region $\mathfrak{I}^+(p)$ becomes an analogue of the ``light cone" of  space-time (again using the language of relativity). In our definition of timelike geometry, the points in the light cone are excluded and we will postpone further discussion of light cones till \S \ref{s:anti}.
\end{remark}

We shall prove that the function $F(p,q)$ satisfies the reverse triangle inequality, which we call in this context, after Busemann, the \emph{time inequality}. This inequality holds  for triples of points $p, q$ and $r$ in $\Omega$, satisfying $p< q< r$:
\begin{proposition}[Time inequality]\label{time-ineq}
For any three points $p, q$ and $r$ in $\Omega$, satisfying $p< q< r$, we have
\[
F(p, q) + F(q, r) \leq F(p, r).
\]
\end{proposition}
\begin{proof}  
We use the formula given by Proposition \ref{prop:star} for the timelike Funk distance. We have, from  ${\mathcal P}(q) \supset {\mathcal P}(r)$ (Corollary \ref{p:inclusion}):
 \begin{eqnarray*}
F(p,q)+F(q,r)&=& 
\inf_{\pi \in {\mathcal{P}(q)}} \log \frac{ d(p, \pi)}{ d(q, \pi)} +
\inf_{\pi \in {\mathcal{P}(r)}} \log \frac{ d(q, \pi)}{ d(r, \pi)}\\
&\leq & 
  \inf_{\pi \in {\mathcal{P}(r)}} \log \frac{ d(p, \pi)}{ d(q, \pi)} +
\inf_{\pi \in {\mathcal{P}(r)}} \log \frac{ d(q, \pi)}{ d(r, \pi)}\\
&\leq &
\inf_{\pi \in {\mathcal{P}(r)}}\left( \log \frac{ d(p, \pi)}{ d(q, \pi)}
+  \log \frac{ d(q, \pi)}{ d(r, \pi)}
\right) \\
&=&
  \inf_{\pi \in {\mathcal{P}(r)}} \log \frac{ d(p, \pi)}{ d(r, \pi)}\\
  &=& F(p,r).
\end{eqnarray*}
\end{proof} 

 In the rest of this section, we study geodesics and spheres in timelike Funk geometries.  
  
First we consider geodesics for the timelike Funk distance. We start with the definition of a geodesic. This definition is the same as in an ordinary metric spaces, except that some care has to be taken so that the distances we need to deal with are always defined.

A \emph{geodesic} is a path $\sigma:J\to \Omega$, where $J$ may be an arbitrary interval of $\mathbb{R}$, such that for every pair $t_1\leq t_2$ in $J$ we have $\sigma(t_1)\leq \sigma(t_2)$ and for every triple $t_1\leq t_2\leq t_3$ in $J$ we have 
\[F(\sigma(t_1),\sigma(t_2))+ F(\sigma(t_2),\sigma(t_3))
=F(\sigma(t_1),\sigma(t_3)).\]

 It follows easily from the definition that for any $p<q$, the Euclidean segment $[p,q]$ joining $p$ to $q$ is the image of a geodesic. This means that the distance function $F$ satisfies Hilbert's Fourth Problem \cite{Pap-H} when this problem is generalized in an appropriate way to include timelike spaces. (We recall that one form of this problem asks for a characterization of metrics on subsets of Euclidean space such that the Euclidean lines are geodesics for this metric.)
In particular, the time inequality becomes an equality when $p$, $q$ and $r$ satisfying $p<q<r$ are collinear in the Euclidean sense.

It is important to note that whenever we use geodesics in timelike spaces, it is understood that these geodesics are equipped with a natural orientation. Traversed in the reverse sense, they are not geodesics.
 
 Let us make an observation which concerns the non-uniqueness of geodesics and the case of equality in the time inequality. Assume that the boundary of the convex hypersurface $K$ contains a Euclidean segment $s$. Take three points $p,q,r$ in $\Omega$ such that  $P(p,q)$ and $P(q,r)$ intersect $s$ (Figure \ref{segment}).
 \begin{figure}[!ht] 
\centering
 \psfrag{p}{\small p} 
 \psfrag{q}{\small q}
  \psfrag{r}{\small r}  
    \psfrag{s}{\small s}  
\includegraphics[width=0.45\linewidth]{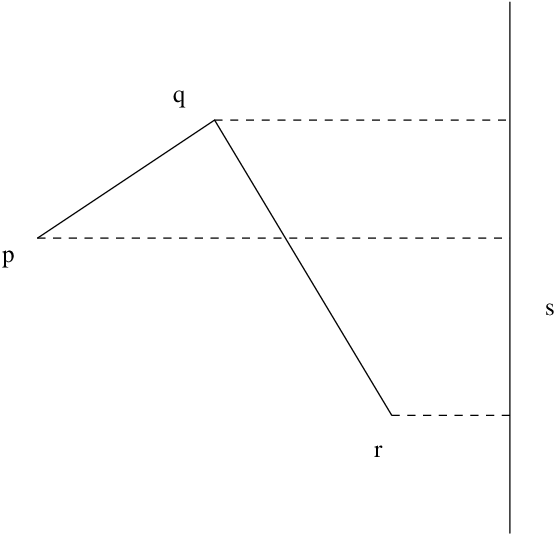}    \caption{\small {The broken segment $pqr$ is a geodesic}}   \label{segment}  
\end{figure}
Then, using the Euclidean intercept theorem, we have 
\[F(p,r)=F(p,q)+F(q,r).\]
Applying the same reasoning to an arbitrary ordered triple on the broken Euclidean segment $[p,q]\cup[q,r]$, we easily see that this segment is an $F$-geodesic.  More generally, by the same argument, we see that any arc in $\Omega$ monotonically heading toward the segment $s$ (that is, the Euclidean distance from a point on this arc and the segment $s$ is decreasing)
and such that any ray joining two consecutive points on the arc hits the segment $s$ is the image of an $F$-geodesic.

We deduce the following:   

\begin{proposition}\label{prop:geo1} A timelike Funk geometry $F$ defined on a set $\Omega_\leq $ associated with a convex hypersurface $K$ in $\mathbb{R}^n$ satisfies the following properties:
\begin{enumerate}
\item \label{eucl1} The Euclidean segments  in $\Omega$ that are of the form $[p,q]$, where $p<q$ are in $\Omega$, are $F$-geodesics. 
\item Let $p\in \Omega$ and  $b\in K(p)$. Then the semi-open Euclidean segment $[p,b[$ from  $p$ to $b$, equipped with the metric induced from the timelike Funk distance, is isometric to a Euclidean ray. 
\item The Euclidean segments in (\ref{eucl1}) are the unique $F$-geodesic segments
if and only if the convex set $I$ is strictly convex.
\end{enumerate}
\end{proposition}
 The proof is the same as that of the equivalence between (1) and (2) in Corollary 8.7 of \cite{PT1}, up to reversing some of the inequalities (i.e. replacing the triangle inequality by the time inequality), therefore we do not include it here.

After the geodesics, we consider spheres.
 \begin{definition}[Future spheres]
 At each point $p$ of $\Omega$, given a real number $r>0$, the \emph{future sphere} of radius $r$ centered at $p$  is the set of points in $\Omega$ that are in the future of $p$ and situated at $F$-distance $r$ from this point.
 \end{definition}

\begin{proposition}\label{prop:FS} At each point $p$ of $\Omega$ and for each $r>0$, the future sphere of center $p$ and radius $r$ is a piece of a convex hypersurface that is affinely equivalent to $K(p)$, the future of $p$ in $K$.
\end{proposition}

The proof is analogous to that of Proposition 8.11 of \cite{PT}, and we do not repeat it here.
 
 We next show a useful monotonicity result for a pair of timelike Funk geometries.  

  Given our open convex set $I$ with associated Funk distance $F$, we let
$\widehat{I} \supset I$ be another open convex set containing $I$ and $\widehat{F}(p, q)$ the associated timelike distance defined on the appropriate set of pairs $(p,q)$.

\begin{proposition}\label{prop:mono} For all $p$ and  $q$ in the domains of definition of both distances $F$ and $\widehat{F}$ (that is, for  pairs $(p,q)$ satisfying $p<q$ with respect to both convex sets $I$ and $\widehat{I}$), we have
\[
\widehat{F}(p, q) \geq F(p, q).
\]
\end{proposition}

\begin{proof} Using the notation of Definition \ref{def:Funk10}, we have
\[F(p, q) = \log \frac{d(p, b(p,q))}{d(q, b(p,q))}.\]
With similar notation, we have
\[\widehat{d}(p, q) = \log \frac{d(p, \widehat{b}(p,q))}{d(q, \widehat{b}(p,q))}.\]
Since  $\widehat{I} \supset I$, we have 
$d(p, \widehat{b}(p,q))=d(p, b(p,q))+x$ and $d(q, \widehat{b}(p,q))=d(q, b(p,q))+x$ for some $x \geq 0$.
The result follows from the fact that the function defined for $x \geq 0$ by
\[x\mapsto \frac{a-x}{b-x},\]
where  $b<a$ are two constants, is increasing.
 \end{proof}

\section{Timelike Minkowski spaces}\label{s:Minkowski}
Consider a finite-dimensional vector space, which we identify without loss of generality with $\mathbb{R}^n$. We introduce on this space  a \emph{timelike norm function} which we also call a \emph{timelike Minkowski functional}, in analogy with the usual Minkowski functional (or norm function) defined in the non-timelike case. To be more precise, we start with the following definition  (cf. \cite{B-Timelike} \S\,5).
\begin{definition}[Timelike Minkowski functional]\label{def:Min}
A \emph{timelike Minkowski functional} is a function $f$ satisfying the following:
\begin{enumerate}
\item $f$ is defined on $C\cup \{O\}$, where $C\subset\mathbb{R}^n$ is a proper open convex cone of apex the origin $O\in\mathbb{R}^n$, that is, an open convex subset invariant by the action of the positive reals $\mathbb{R}_{>0}$. (The fact that $C$ is proper means that it possesses a supporting hyperplane which intersects it only at the apex.)
\item $f(O)=0$.
 \item $f(x)> 0$ for all $x$ in $C$.
\item $f(\lambda x)=\lambda f(x)$ for all $x$ in $C$ and $\lambda>0$.
\item \label{c} $f\left( (1-t)x+ty\right) > (1-t)f(x)+t f(y)$ for all $x, y \in C$ and for all $0<t<1$. \label{concavity}
\end{enumerate}
\end{definition}
We shall say that $C\subset\mathbb{R}^n$ is the \emph{cone associated with the timelike Minkowski functional $f$}.

Note that since $-f$ is a convex function, it is continuous.

The \emph{unit sphere} $B$ of such a timelike norm function $f$ is the set of vectors $x$ in $C$ satisfying $f(x)=1$. In general, $B$ is a piece of a hypersurface in $\mathbb{R}^n$ which is concave when viewed from the origin $O$ (see Figure \ref{concave}).  Our definition allows the possibility that $B$ is asymptotic to the boundary of the cone $C$. The unit sphere $B$ is called the \emph{indicatrix} of $f$. 

\begin{figure}[!ht] 
\centering
 \psfrag{O}{\small  $O$} 
 \psfrag{B}{\small $B$} 
\includegraphics[width=0.55\linewidth]{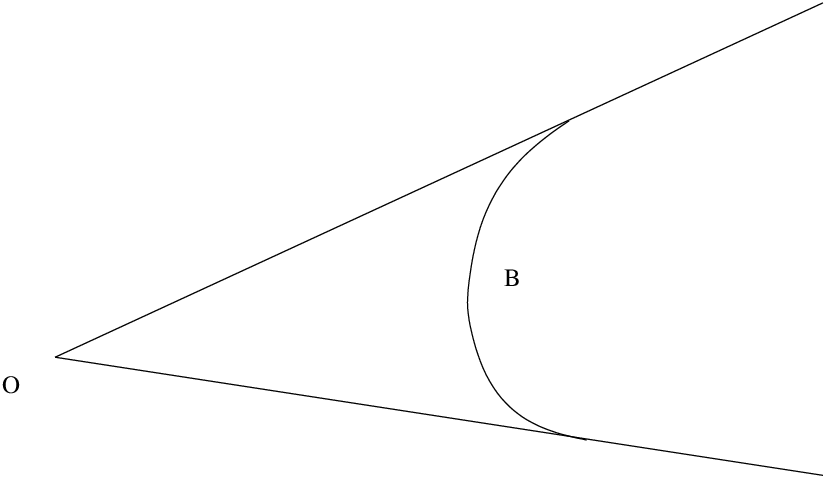}    \caption{\small {The indicatrix $B$ in the tangent space to a point in $\Omega$.}}   \label{concave}  
\end{figure}

The reason of the adjective \emph{timelike} in the above definition is that in the Lorentzian setting, the Minkowski norm measures the lengths of vectors in the time cone, which is the part of space-time where material particles move. In particular, there is a timelike Minkowski functional $f$ for the standard Minkowski space-time $
{\mathbb R}^{3,1}$, equipped with the Minkowski metric
\[
ds^2 = - dx_0^2 + d x_1^2 + dx_2^2 + dx_3^2.
\] 
 It is given by
\[
f(x) = \sqrt{-(- x_0^2 + x_1^2 + x_2^2 + x_3^2)}
\]
and it is defined
for vectors $x$ in ${\mathbb R}^4$ satisfying $ - x_0^2 + x_1^2 + x_2^2 + x_3^2 < 0$ or $x=0$.
 
 \section{Timelike Finsler structures}\label{s:Timelike-Finsler}
   
 \begin{definition}[Timelike Finsler structure]\label{def:TFS} A timelike Finsler structure on a differentiable manifold $M$ is a family of functions $\{f_p\}_{p\in M}$, where for each $p\in M$, $f_p$ is a timelike Minkowski functional defined on the tangent space $T_pM$ of $M$ at $p$. In particular, in the tangent space at each point $p$ in $M$, there is a cone $C_p$ associated  to $f_p$ which plays the role of the cone $C\cup \{O\}$ associated in Definition \ref{def:Min} to a general timelike Minkowski functional. We assume that $f_p$ together with its associated cone $C_p$ depend continuously on the point $p$. \end{definition}

 We shall sometimes use the notation $f(p,x)$ instead of $f_p(x)$ for the Minkowski functional.
 
 In the situations considered in this paper, the differentiable manifold $M$ will be generally an open subset of either a Euclidean space $\mathbb{R}^n$ or a sphere $S^n$. (In some rare cases, it will be a subset of a hyperbolic space $\mathbb{H}^n$.)

We say that a piecewise  $C^1$ curve  $\sigma: [0,1]\to  M$, $t\mapsto \sigma(t)$ is {\it timelike} if at each 
time $t\in J$ the tangent vector $\sigma'(t)$ (if this tangent vector exists) is an element of the cone $C_{\sigma(t)} \subset T_{\sigma(t)}M$. At the points $t\in J$ where $\sigma$ is not $C^1$, there are two naturally defined tangent vectors and we assume that both are in $C_{\sigma(t)}$.

\begin{definition}[The partial order relation]\label{prec-order}
If $p$ and $q$ are two points in $M$, we write $p \prec q$, and we say that \emph{$q$ is in the $\prec$-future of $p$}, if there exists a timelike piecewise $C^1$ curve  $\sigma: [0,1] \to M$ joining $p$ to $q$.
\end{definition}

 We define the {\it length} of a piecewise $C^1$ timelike curve $\sigma: [0,1] \to M$ by the Lebesgue integral 
\[
{\rm Length}(\sigma) = \int_0^1 f_{\sigma(t)}(\sigma'(t)) \, dt.
\]

We then define a function $\delta$ on pairs of points $(p, q)$ satisfying  $p\prec q$ by setting
\begin{equation}\label{eq:sup}
\delta(p, q) = \sup_{\sigma} {\rm Length}(\sigma)
\end{equation}
where the supremum is taken over all the timelike piecewise $C^1$ curves $\sigma:[0,1]\to M$ satisfying $\sigma(0)=p$ and $\sigma(1)=q$. We shall show that $\delta$ defines a timelike distance function.

 It is easy to see from the definition of $\delta$ that it satisfies the timelike inequality, once we show the following
 \begin{lemma}\label{lemma:finite}
 For any pair $p$ and $q$ satisfying $p\prec q$, we have   $\delta(p, q) <\infty$.
 \end{lemma}

\begin{proof}
To see that the supremum in (\ref{eq:sup}) is finite, we introduce a reference metric on a chart of the manifold modelled on the Minkowski space-time $(\mathbb{R}^n, -c^2 dt^2 + dx_1^2 + \dots + dx_{n-1}^2)$ as follows.  Let $(U, \phi)$ be a local chart on $M$ containing a point $p$ so that $\phi(U)$ is an open subset of $\mathbb{R}^{n}$ with $\phi(p)  = 0$.  As $\phi: U \rightarrow \phi(U)$ is a diffeomorphism, each open cone $C_p$  in $T_p M$ on which the Minkowski functional $f_p: C_p \rightarrow \mathbb{R}$ is defined is mapped onto a proper convex cone $C_{\phi(p)}$ in $\mathbb{R}^n$ by the linear map $d\phi_p$.
Hence we have a field of proper cones $\{C_x\}_{x \in \phi(U)}$.  By the continuity of $d \phi_p$ in $p$, there exists an open neighborhood  $V \subset U$ of $p$ so that on $\overline{V}$ there is a field of supporting hypersurfaces of $\{C_x\}_{x \in \overline{V}}$: $\{\pi_x  \subset T_x \mathbb{R}^n\}$ with all the  hyperplanes $\{\pi_x\}_{x \in \overline{V}}$ sharing the same normal vector in $\mathbb{R}^n$. 

We now introduce a  Minkowski metric $g_c = -c^2 dt^2 + dx_1^2 + \dots + dx_{n-1}^2$  on $\overline{V} \subset \mathbb{R}^n$ where the constant $c$ (the ``speed of light") will be determined below. The $x_1x_2\dots x_{n-1}$-plane is identified 
with $\pi_x$ for each $x \in \overline{V}$. We also consider $B_1(x) \subset T_x \mathbb{R}^n$, the set of future directed timelike vectors $v$ with $-1 < 
g_c(v, v) \leq 0$.   Then we can choose the constant $c>0$  sufficiently large so that 
\begin{enumerate}
\item the light cone $\{v \in T_x \mathbb{R}^n \,|\, g_c(v, v) < 0\}$ properly contains $C_x$ at each $x \in \overline{V}$;
\item each $g_c$-unit vector $v$ in $\partial B_1(x) \cap C_x$, which is identified with a tangent vector $w= (d \phi_x)^{-1}v$ in $T_{\phi^{-1} (x)} M$, has norm  $f_{\phi^{-1} (x)}(w) < 1$. 
\end{enumerate}
So far, we have defined an auxiliary norm $f^M_q:C_q \rightarrow  \mathbb{R}$ for any $q \in \phi^{-1} 
(\overline{V})$ with $f^M_q(v) > f_q(v)$. We denote the distance with respect to the Minkowski metric $g_c$ by $d_c$.  Note that the condition $(1)$ ensures that a timelike curve in $M$ with respect to the family of norms $f_q$ is also timelike for the auxiliary family of norms $f_q^M$.  

Now given a  timelike $C^1$-curve $\sigma: [a, b] \rightarrow \phi^{-1}(\overline{V}) \subset M$ through $p = \sigma(0)$, we have the following length comparison 
\[
  \int_a^b f_{\sigma(t)} (\sigma'(t)) \, dt  < \int_a^b f_{\sigma(t)}^M(\sigma'(t)) \, dt
\] 
with the auxiliary length bounded above,
\[
\int_a^b f_{\sigma(t)}^M(\sigma'(t)) \, dt < d_c (\phi(\sigma(a)), \phi(\sigma(b))) < \infty,
\]
as the line segment $[\phi(\sigma(a)), \phi(\sigma(b))]$ is the length maximizing timelike curve in the Minkowski space-time
 $(\mathbb{R}^n, -c^2 dt^2 + dx_1^2 + \dots + dx_{n-1}^2)$.
 \end{proof}

It follows that the timelike distance function $\delta$ defines a timelike structure on the space $M_{\preceq}$ of pairs of points $(x,y)$ in $M\times M$ satisfying $x\preceq y$.  This timelike structure is the analogue of the so-called \emph{intrinsic metric} in the classical (non-timelike) case. We call $\delta$ the \emph{timelike intrinsic distance} associated with the timelike Finsler structure.

\section{The timelike Finsler structure of the timelike Funk distance}\label{s:Funk-Finsler}

In this section, we show that the timelike Funk distance $F$ associated with a convex hypersurface $K$ in $\mathbb{R}^n$ is timelike Finsler in the sense defined in \S \ref{s:Timelike-Finsler}. In other words, we show that on the tangent space at each point of $\Omega=\mathbb{R}^n\setminus \overline{K}$, there is a  timelike Minkowski functional (which we also call a timelike norm) which makes this space a timelike Minkowski space, such that the timelike Funk distance $F(p,q)$ between two points $p$ and $q$ is obtained by integrating this timelike norm on tangent vectors along piecewise $C^1$ paths joining $p$ to $q$ and taking the supremum of the lengths of such paths.  The paths considered are restricted to those where the tangent vector at each point of $\Omega$ belongs to the domain of the timelike Minkowski functional.

For every point $p$ in  $\Omega_{\leq}$ associated with a convex hypersurface $K$, there is a \emph{timelike Minkowski functional} $f_F(p,v)$ defined on the subset of the tangent space $T_p\Omega$ of $\Omega$ at $p$ consisting of the non-zero vectors $v$ satisfying 
\[
 p + tv \in \mathfrak{I}^+(p) \mbox{ for some } t > 0
\]
where, as before, $\mathfrak{I}^+(p)$ is the future of $p$.
 We denote by  $C^+(p) \subset T_p \Omega$ the set of   vectors $v$ that satisfy this property or are the zero vector.  We define the function $f_F(p,v)$ for  $p \in \Omega$ and  $v \in C^+(p)$  by the following formula:
\begin{equation}\label{eq:P2TF}
f_F(p, v) = \inf_{\pi \in {\mathcal P}(p)} \frac{\langle v, \eta_\pi \rangle }{d(p, \pi)}
\end{equation}
for $v \in C^+(p)$   where ${\mathcal P}(p)$ is as before the set of supporting hyperplanes to $K$ separating $p$ from the interior of $K$ and where for each hyperplane $\pi$ in ${\mathcal P}(p)$, $\eta_\pi$ is the unit tangent vector at $p$ perpendicular to $\pi$ and pointing toward $\pi$. We also define $f_F(p,0)=0$ for all $p$ in $\Omega$. We shall show that this defines a timelike Minkowski functional and that this functional is associated with a timelike Finsler geometry underlying the timelike Funk distance $F$.

By elementary geometric arguments (see \cite{Y1} for a detailed discussion in the non-timelike case which can be adapted to the present setting) it is shown that
\begin{equation}\label{eq:P1}
f_F(p, v) = \frac{\|v\|}{\inf\{t \,\, | \,\, p + t \frac{v}{\|v\|} \in  \overline{K} \}}
\end{equation}
for any nonzero vector $v \in C^+(p)$. 

Note that the quantity $\inf\{t \,\, | \,\, p + t \frac{v}{\|v\|} \in \overline{K} \}$ in the denominator is the Euclidean length of the line segment 
from $p$ to the point where the ray $p + tv$ hits the convex set $\overline{K}$ for the first time. A simpler way to write the functional defined in (\ref{eq:P2TF}) is:
 \begin{equation}\label{eq:P211}
 f_F(p,v)=\sup\{t:p+v/t\in \overline{K}\}.
 \end{equation}

We have the following;

\begin{proposition}
The functional $f_F(p, v)$ defined on the open cone $C^+(p)$ in $T_p \Omega \cong {\mathbb R}^n$ satisfies all the properties required by a timelike Minkowski functional.  
\end{proposition}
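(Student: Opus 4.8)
To show that $P(p,v)$ from formula (\ref{eq:P1}) satisfies the four axioms of a timelike Minkowski functional (Definition \ref{def:Min}), I would work with the explicit expression
\[
P(p,v) = \frac{\|v\|}{\tau(p,v)}, \qquad \tau(p,v) := \inf\{t>0 \,\, | \,\, p + t\tfrac{v}{\|v\|} \in K\},
\]
defined for $v$ in the open cone $D(p)$, with $P(p,0)=0$. I would first record that $D(p) = C(p)-p$ is an open convex cone with apex at the origin (this is exactly the shape of $C(p)$ described in Section \ref{Funk}: it is the interior of the cone spanned by $K$ from the vertex $p$), which gives axiom (1). Axiom (2) is immediate: for $v\in D(p)$ the ray $p+tv$ meets $K$ transversally at a point at finite Euclidean distance, so $0 < \tau(p,v) < \infty$ and hence $P(p,v) > 0$. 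Axiom (3), positive homogeneity of degree one, follows because rescaling $v$ by $\lambda>0$ multiplies both $\|v\|$ and $\tau(p,v)$ by $\lambda$ — note $\tau(p,\lambda v) = \lambda \tau(p,v)$ since $v/\|v\|$ is unchanged — so the quotient scales by $\lambda$.

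**The substantive point — superadditivity (axiom (4)).** The one axiom requiring actual work is the reversed convexity inequality
\[
P(p, (1-t)v + tw) \geq (1-t)P(p,v) + tP(p,w), \qquad 0<t<1.
\]
Here I would use the variational formula (\ref{eq:P2}), namely $P(p,v) = \inf_{\pi\in\mathcal P(p)} \langle v,\eta_\pi\rangle / d(p,\pi)$, where $\eta_\pi$ is the inward unit normal to the supporting hyperplane $\pi$. For each fixed $\pi$, the map $v \mapsto \langle v,\eta_\pi\rangle / d(p,\pi)$ is \emph{linear} in $v$, hence in particular affine, so it is equal to its own value on any convex combination; therefore
\[
\frac{\langle (1-t)v+tw,\, \eta_\pi\rangle}{d(p,\pi)} = (1-t)\,\frac{\langle v,\eta_\pi\rangle}{d(p,\pi)} + t\,\frac{\langle w,\eta_\pi\rangle}{d(p,\pi)} \;\geq\; (1-t)P(p,v) + tP(p,w),
\]
the inequality coming from replacing each term by the infimum over $\pi$. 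Taking the infimum over $\pi\in\mathcal P(p)$ on the left-hand side yields precisely $P(p,(1-t)v+tw) \geq (1-t)P(p,v)+tP(p,w)$, as desired. (One should check that $(1-t)v+tw$ again lies in $D(p)$, which holds because $D(p)$ is convex.) This is the standard mechanism by which an infimum of linear functionals is concave — the reversal relative to the classical Minkowski setting is exactly the replacement of a supremum by an infimum, as already flagged after formula (\ref{eq:FF}).

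**What to be careful about.** The only real obstacle is making sure the two formulas (\ref{eq:P2}) and (\ref{eq:P1}) genuinely agree and that the infimum in (\ref{eq:P2}) is attained (or at least approached) so that the argument above is legitimate; this is the "elementary geometric argument" referenced to \cite{Y1}, and it reduces to the similar-triangles computation already performed in Section \ref{Funk} relating $d(p,\pi)/d(q,\pi)$ to the first hitting point $b(p,q)$ — specifically, the supporting hyperplane at the first hitting point $b(p, p+v)$ realizes the infimum, which is why (\ref{eq:P1}) uses the first intersection with $K$. Once that identification is in hand, axiom (4) via the linearity-of-$\langle\cdot,\eta_\pi\rangle$ argument is the cleanest route; trying to prove superadditivity directly from the quotient $\|v\|/\tau(p,v)$ in (\ref{eq:P1}) is possible but messier, since one would need to compare $\tau$ along a broken path with $\tau$ along the combined direction, essentially re-deriving the variational formula anyway. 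I would therefore structure the proof as: (1)–(3) from (\ref{eq:P1}) by inspection, (4) from (\ref{eq:P2}) by the infimum-of-linear-functionals argument, with a sentence noting that convexity of $D(p)$ keeps all the relevant vectors in the domain.
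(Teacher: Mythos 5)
Your proposal is essentially correct, and it supplies an argument where the paper gives none: the proposition is introduced by ``It is clear that'' and no proof is written out. Your handling of the one nontrivial axiom, (4), is the right mechanism: writing $P(p,v)$ as an infimum over $\pi\in\mathcal{P}(p)$ of functionals $v\mapsto \langle v,\eta_\pi\rangle/d(p,\pi)$ that are linear in $v$, over an index set that does not depend on $v$, and using the fact that an infimum of linear functions is concave, together with convexity of $D(p)$ to keep $(1-t)v+tw$ in the domain. This is presumably exactly what the authors regarded as obvious. One slip to correct in axiom (3): you assert $\tau(p,\lambda v)=\lambda\,\tau(p,v)$ ``since $v/\|v\|$ is unchanged,'' but with your normalization $\tau$ depends only on the unit direction, so in fact $\tau(p,\lambda v)=\tau(p,v)$, and the degree-one homogeneity of $P$ comes from the numerator $\|\lambda v\|=\lambda\|v\|$ alone; as literally written (``both numerator and denominator scale by $\lambda$'') the quotient would be scale-invariant rather than homogeneous of degree one. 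The conclusion is unaffected. Your closing caveat about reconciling the ratio formula with the variational formula is also well placed: the paper itself defers that identification to an ``elementary geometric argument'' citing \cite{Y1}, so your proof inherits exactly the same dependency rather than introducing a new gap.
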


\begin{proof}
It is easy to check the properties required by Definition \ref{def:Min}.  Note that the last property in this definition, namely, the concavity of the linear functional $f_F$ on the tangent space $T_p \mathbb{R}^n$, follows from the fact that $f_F$ is an infimum over $\mathcal{P}(p)$ of linear (and in particular concave) functionals, and such an infimum is concave.   \end{proof}

As in \S \ref{s:Timelike-Finsler}, we say that a piecewise  $C^1$ curve  $\sigma: [0,1]\to   \Omega$, $t\mapsto \sigma(t)$, defined on an interval $[0,1]$ of $\mathbb{R}$, is {\it timelike} if for each $t\in [0,1]$ the tangent vector $\sigma'(t)$ is an element of the cone $C^+(\sigma(t)) \subset T_{\sigma(t)}\Omega$.

If $p$ and $q$ are two points in $\Omega$, we write $p \prec q$, and we say that \emph{$q$ is in the $\prec$-future of $p$}, if there exists a timelike piecewise $C^1$ curve  $\sigma: [0,1] \to   \Omega$ joining $p$ to $q$.

\begin{proposition}\label{order-equiv}
The two partial order relations $<$ and $\prec$ defined on $\Omega$ coincide; namely, for any two points $p$ and $q$ in $\Omega$, we have 
\[
p < q \Leftrightarrow p \prec q.
\]
\end{proposition} 

\begin{proof}
The implication $p < q \Rightarrow p \prec q$ follows from that fact that for $p < q$, the parameterized curve 
\[
\sigma(t) = p + \frac{q-p}{\|q-p\|}t  
\]
for $t \in [0,1]$ is a $C^1$ timelike curve from $p$ to $q$.  

The reverse implication, $p < q \Leftarrow p \prec q$, follows from the following lemma:

\begin{lemma} \label{lem:rev}
Assume that $p$ and $q$ in $\Omega$ satisfy $p<q$. Then, for an arbitrary piecewise $C^1$ timelike curve $\sigma:[0,1]\to \Omega$ with $\sigma(0) =p$ and $\sigma(1)=q$, we have
\[
\sigma(t) \in \mathfrak{I}^+(p)
\]
for all $t$ in $[0,1]$. 
\end{lemma}
\begin{proof}[Proof of Lemma \ref{lem:rev}]

The path $\sigma$, being timelike, starts at the point $p$ with a right derivative at $p$ pointing strictly inside the cone $C(p)$. This implies that the point $\sigma(t)$ is strictly inside the set ${\mathfrak I}^+ (p)$ for any sufficiently small  $t$. 

If the image of $\sigma$ is not completely contained in $\mathfrak{I}^+(p)$, then there is a smallest $t_0>0$ in $[0,1]$ such that $\sigma(t_0)$ is on the boundary of set ${\mathfrak I}^+ (p)$. Let $p_0=\sigma(t_0)$ and assume $\sigma$ is differentiable at $t_0$. Then the tangent vector to $f(\sigma)$ at $t_0$ is a vector at $p_0$ which is either contained in the boundary of ${\mathfrak I}^+ (p)$ or points outside this ${\mathfrak I}^+ (p)$. But this contradicts the fact that the tangent vector to $f(\sigma)$ at $p_0$ is in $C_{p_{0}}$. 

If $\sigma$ is not differentiable at $t_0$, then (since this curve is piecewise $C^1$) there are two tangent vectors at this point, and the same argument applied to one of these vectors gives the same contradiction. 

Thus, the image of $\sigma$ is completely contained in $\mathfrak{I}^+(p)$, which proves the lemma.

%Likewise, from the continuity of $\sigma$ and the openness of ${\mathfrak I}^+(p)$, the connected component of the set $\{ t \in [0, 1] \,| \, \sigma(t) \in {\mathfrak I}^+(p)\}$ containing $t=0$ is open in $[0, 1]$.  Define $a_0$ by
%\[
%a_0 = \sup \{ a \in [0,1] \, | \,  \sigma (t) \in {\mathfrak I}^+(p) \mbox{ for } 0 \leq t \leq a \}.
%\] 
%We want to show that $a_0 = 1$. This will imply that  the set $\{ t \in [0, 1] \,| \, \sigma(t) \in {\mathfrak I}^+(p)\}$ is an open and closed subset of $[0,1]$. This will give the desired result. 
%
%Suppose the contrary; namely $a_0 < 1$ thus $\sigma(a_0)$ lies in the boundary $\partial {\mathfrak I}^+(p)$. Then we can find a sequence $a_i  \nearrow a_0$ such that $p < \sigma(a_i)$ for all $i$ and 
%such that \[
%\lim_{i \rightarrow \infty} {\rm distance}(\sigma(a_i),  \partial {\mathfrak I}^+(p)) = 0,
%\]   
%while ${\rm distance}(\partial {\mathfrak I}^+(p), \partial {\mathfrak I}^+(a_i))$ is uniformly bounded from below by some positive number for all $i$ sufficiently large, as follows from Proposition \ref{eq:stronger}. This gives the desired contradiction. 
%This proves Lemma \ref{lem:rev}.
\end{proof}

We continue the proof of Proposition \ref{order-equiv}. For $p$ and $q$ in $\Omega$ satisfying $p \prec q$, we let $\sigma:[0,1]\to \Omega$ be an arbitrary piecewise $C^1$ timelike curve satisfying  $\sigma(0) =p$ and $\sigma(1)=q$. From Lemma  \ref{lem:rev}, we have $\sigma(1) = q  \in {\mathfrak I}^+ (p)$, therefore $p < q$. This finishes the proof of Proposition \ref{order-equiv}. 
\end{proof}

As we did in \S \ref{s:Timelike-Finsler}, we denote by $\delta$ the timelike intrinsic distance function associated with this timelike Finsler structure, namely,
\begin{equation}
\delta(p, q) = \sup_{\sigma} {\rm Length}(\sigma)
\end{equation}
where the supremum is taken over all the timelike piecewise $C^1$ curves $\sigma:[0,1]\to \Omega$ satisfying $\sigma(0)=p$ and $\sigma(1)=q$.  Like in Lemma \ref{lemma:finite}, it is seen that the intrinsic distance $\delta(p, q)$ for $p<q$ is finite. 

Thus, the domain of definition of the set $\Omega_<$ associated with the partial order $<$ for  the timelike Funk distance $F_1^2$  and the domain of definition $\Omega_\prec$ for the  timelike Finsler distance function $\delta_1^2$  coincide. Furthermore, we shall prove the equality $\delta(p, q) =F (p, q)$ for any pair $p < q$ in $\Omega_<$. We state this as follows:

\begin{theorem}\label{th:distance}
The value of the timelike Finsler distance $\delta(p, q)$ for a pair $(p,q)\in \Omega_{\leq}$ coincides with $F(p, q)$. That is, we have
\[
F(p, q) = \delta(p, q).
\]
\end{theorem}

In other words, we have the following
\begin{theorem}\label{Funk-Finsler-1} 
The timelike Funk geometry is a timelike Finsler structure defined by the 
Minkowski functional $f_F(p,v)$.
\end{theorem}

The timelike Minkowski functional $f_F(p,v)$ which underlies a timelike Funk geometry has a property which makes that metric the
\emph{tautological Finsler structure} associated with the hypersurface $K$ (or the convex body $I$).  
The term ``tautological" is due to the fact that the indicatrix of the timelike Minkowski functional at $p \in \Omega$, that is, the set 
\[
{\rm Ind}(p) = \{v \in C^+(p) \subset T_p \Omega \,\, | \,\, f_F(p, v) = 1\},
\]
is affinely equivalent to the part of $K$ that is ``visible from the point $p$", that is, the relative interior (with respect to the topology of $K$) of the intersection of that hypersurface with $\overline{\mathfrak{I}^+(p)}$, the closure in $\mathbb{R}^n$ of the subset $\mathfrak{I}^+(p)$.    This property is the timelike analogue of a property of the indicatrix of the classical Funk metric which makes it tautological, as noticed in the paper \cite{PT}.

We also note that with this identification, given a pair of points $p, q$ with $p < q$, there always exists a distance-realizing (length-maximizing) $F$-geodesic from $p$ to $q$, namely, the Euclidean segment $[p,q]$.
% In the theory of relativity, the existence of geodesics is established by imposing the so-called global hyperbolicity  (\cite{Penrose} or \cite{HE}) of the space-time.  
%The global hyperbolicity implies  two conditions: the non-existence of closed causal curves and the sequential compactness of  geodesics in $C^0$-topology. 
% In our setting, the first condition is satisfied, and the second follows from the fact that $\overline{s_M(p)} \subset T_p \Omega$ is closed and bounded.

\begin{proof}[Proof of Theorem \ref{th:distance}] For any pair $(p, q)\in(\Omega\times\Omega)$ satisfying $p<q$, we consider the map
\begin{equation}\label{eq:sigma}
\sigma:[0,1]\to\mathbb{R}^n
\end{equation}
parametrizing the Euclidean segment $[p,q]$ proportionally to arc-length $t$ with $\sigma(0)=p, \sigma(1)=q.$  Then we have 
\[
\int_0^1 f_F(\sigma(t), \sigma'(t)) \, dt = \log \frac{d(p, b(p,q))}{d(q, b(p,q))} = F(p, q),
\] 
since 
\[
\frac{d}{dt} \log \frac{d(p, b(p,q))}{d(\sigma(t), b(p,q))} = f_F(\sigma(t), \sigma'(t)).
\]
By taking the supremum over the set of all paths from $p$ to $q$, we obtain the inequality
\begin{equation}\label{eq:11}
\delta(p, q) \geq  F(p, q).
\end{equation}

Before continuing the proof of Theorem  \ref{th:distance}, we show a useful monotonicity property of the intrinsic distance.

Let $\widehat{I} \supset I$ be an open convex set containing the convex subset $I$ of \S \ref{Funk} and let $\widehat{K}$ be its bounding hypersurface. Let $\widehat{F}$ be the associated timelike Funk metric, $f_{\widehat{F}}$  its associated timelike Minkowski functional, and $\widehat{\delta}$ the associated intrinsic distance. (Note that the domains of definition of   $f_{\widehat{F}}$ and $\widehat{\delta}$ contain those of $f_F$ and $\delta$ respectively.) We have the following:

\begin{lemma}\label{lem:another1} For $(p,q)$ in the domains of definition of both intrinsic distances $\delta$ and $\widehat{\delta}$, we have
\[
\widehat{\delta}(p, q) \geq \delta(p, q).
\]
\end{lemma}

\begin{proof} 
The timelike Minkowski functionals   $f_F$ and $f_{\widehat{F}}$ satisfy the following inequality  
\[
f_{\widehat{F}}(x, v) \geq f_F(x, v) 
\]
whenever the quantities involved are defined concurrently.  This follows from the definition of the Minkowski functional:
\begin{equation}\label{eq:P22}
f_F(p, v) = \frac{\|v\|}{\inf\{t \,\, | \,\, p + t \frac{v}{\|v\|} \in  K \}}
\end{equation}
for any nonzero vector $v$ in both domains of definition, as $\widehat{K}$ is closer to $p$ than $K$.  
Hence by integrating each functional along an  admissible path (note that admissible paths for $\delta$ are also admissible paths for $\widehat{\delta}$) and taking the supremum over these paths, we obtain
\[
\widehat{\delta}(p, q) \geq \delta(p, q).
\]
 \end{proof}

\noindent {\it Proof of Theorem  \ref{th:distance} continued}.--- Suppose that we have a convex hypersurface $K$ bounding an open convex set $I $, and for each $(p, q) \in \Omega_<$,  let 
\[\widehat{I} = H^+_{\pi_{b(p, q)}},\]
 where $H^+_{\pi_{b(p, q)}}$  is 
the open half-space  bounded by a hyperplane $\pi_{b(p, q)}$ supporting $K$ at $b(p, q)$ and containing $I$. The open set $\widehat{I}$ is equipped with its intrinsic distance $\widehat{\delta}$. Applying Lemma \ref{lem:another1} to this setting where a convex set  $\widehat{I}$  
contains $I$, we obtain $\widehat{\delta} \geq \delta$.

For the open half-space $\widehat{I} = H^+_{\pi_{b(p, q)}}$, the values of $F(p, q)$, $\widehat{F}(p, q)$ and $\widehat{\delta}(p, q)$ all coincide. Indeed, under the hypothesis $\widehat{I} = H^+_{\pi_{b(p, q)}}$, the set $
\mathcal P$ of supporting hyperplanes consists of the single element $\pi_{b(p, q)}$, and the line segment
$\sigma$ from $p$ to $q$ defined in (\ref{eq:sigma}) is a length-maximizing path, since every timelike path for $\widehat{I}$ is $\widehat{F}$-geodesic.  
% Also from the lemma above, we have  $\widehat{\delta}(p, q) \geq \delta(p, q)$.

Combining the above observations, we have 
\begin{equation}\label{eq:12}
 F(p, q) = \widehat{F}(p, q) = \widehat{\delta}(p, q) \geq \delta(p, q) \geq F(p, q)
\end{equation}
and  the equality $\delta(p, q) =F(p, q)$ follows.
\end{proof}

   We end this section by the following convexity property on the timelike Funk distance associated with a strictly convex hypersurface $K$:
   \begin{theorem} Assume that $K$ is strictly convex. 
For any point $x$ in $\Omega$ and $M > 0$, the set of points 
\[
S_M(x) = \{p \in \Omega \,\, | \,\, p < x \mbox{ and } F(p, x) > M \}
\]
is a convex subset of $\Omega = {\mathbb R}^n \setminus \overline{K}$.
\end{theorem}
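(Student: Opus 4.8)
The plan is to exhibit $S_K(x)$ as an intersection of open half-spaces, using the variational formula (\ref{eq:FF}) together with Proposition~\ref{p:inclusion}. Fix $x\in\Omega$. By (\ref{eq:FF}) we have $F(p,x)=\inf_{\pi\in{\mathcal P}(x)}\log\big(d(p,\pi)/d(x,\pi)\big)$ for every $p<x$, and, by the discussion leading to (\ref{eq:FF}), this infimum is \emph{attained}: it equals $\log\big(d(p,\pi_{0})/d(x,\pi_{0})\big)$ for any hyperplane $\pi_{0}=\pi_{b(p,x)}$ supporting $K$ at $b(p,x)$, and such a $\pi_{0}$ lies in ${\mathcal P}(x)$. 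For each $\pi\in{\mathcal P}(x)$ the map $p\mapsto d(p,\pi)$ is the restriction to the open half-space $H_\pi$ of an affine function $\ell_\pi$ on $\mathbb{R}^n$ which is positive exactly on $H_\pi$, and $d(x,\pi)$ is a strictly positive constant since $x\in H_\pi$. I would then put
\[
A_\pi:=\{\,p\in\mathbb{R}^n \,\,|\,\, \ell_\pi(p)> e^{K}\,d(x,\pi)\,\},
\]
which is an open half-space, hence convex, and is contained in $H_\pi\subset\Omega$ because $e^{K}d(x,\pi)>0$.

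The heart of the argument is the set identity $S_K(x)=\bigcap_{\pi\in{\mathcal P}(x)}A_\pi$, after which convexity of $S_K(x)$ is immediate, being an intersection of convex sets. For the inclusion ``$\subseteq$'': if $p<x$ then Proposition~\ref{p:inclusion} gives ${\mathcal P}(p)\supseteq{\mathcal P}(x)$, i.e. $p\in H_\pi$ for every $\pi\in{\mathcal P}(x)$, so $\ell_\pi(p)=d(p,\pi)$; and $F(p,x)>K$ forces every term of the defining infimum to exceed $K$, i.e. $d(p,\pi)>e^{K}d(x,\pi)$, whence $p\in A_\pi$. For ``$\supseteq$'': if $p\in\bigcap_\pi A_\pi$ then $\ell_\pi(p)>0$ for all $\pi\in{\mathcal P}(x)$, so $p\in H_\pi$ for all such $\pi$; hence ${\mathcal P}(p)\supseteq{\mathcal P}(x)$, i.e. $p\leq x$, and $p\neq x$ because $\ell_\pi(x)=d(x,\pi)<e^{K}d(x,\pi)$ (here $e^{K}>1$ as $K>0$). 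Thus $p<x$, so $F(p,x)$ is defined; choosing $\pi_0=\pi_{b(p,x)}\in{\mathcal P}(x)$ we get $F(p,x)=\log\big(\ell_{\pi_0}(p)/d(x,\pi_0)\big)>K$, so $p\in S_K(x)$.

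The step I expect to be the only genuinely delicate point is this last strictness: from $p\in\bigcap_\pi A_\pi$ one gets directly only that \emph{each} term of the infimum exceeds $K$, and to conclude $F(p,x)>K$ one must use that the infimum is realized by an actual supporting hyperplane belonging to ${\mathcal P}(x)$; without this one would obtain merely that the closed set $\{p<x\,\,|\,\,F(p,x)\geq K\}$ is convex. Everything else is routine convex geometry: the affinity of $p\mapsto d(p,\pi)$ on $H_\pi$ and the stability of convexity under arbitrary intersections. Equivalently, one can bypass the set identity and argue directly with three points: for $p_0,p_1\in S_K(x)$ and $p_t=(1-t)p_0+tp_1$, the relation $p_t\leq x$ follows from convexity of $\bigcap_{\pi\in{\mathcal P}(x)}H_\pi$, while $d(p_t,\pi)=(1-t)d(p_0,\pi)+t\,d(p_1,\pi)>e^{K}d(x,\pi)$ for every $\pi\in{\mathcal P}(x)$ by affinity, and then the attained infimum yields $F(p_t,x)>K$, so $p_t\in S_K(x)$.
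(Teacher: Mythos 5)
Your proof is correct, but it follows a genuinely different route from the one in the paper. The paper argues analytically: it parametrizes the segment $[p_1,p_2]$ affinely by $t\in[0,1]$, computes for each fixed supporting hyperplane $\pi$ that the second derivative of $\log\bigl(d(s(t),\pi)/d(x,\pi)\bigr)$ is $\leq 0$, concludes that $t\mapsto F(s(t),x)$ is concave as an infimum of concave functions, and reads off the convexity of the superlevel set. You instead exponentiate the threshold and exhibit $S_K(x)$ outright as $\bigcap_{\pi\in{\mathcal P}(x)}A_\pi$, an intersection of open half-spaces. Both arguments ultimately rest on the same two facts --- the variational formula (\ref{eq:FF}) and the affinity of $p\mapsto d(p,\pi)$ on $H_\pi$ --- but they package them differently. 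Your version yields a sharper structural statement (the superlevel set is literally an intersection of half-spaces, not merely convex), avoids differentiation, and is more careful on two points the paper passes over quickly: that points of the segment remain in the past of $x$ (you obtain this from the convexity of $\bigcap_{\pi\in{\mathcal P}(x)}H_\pi$ together with Proposition \ref{p:inclusion}, whereas the paper's proof does not address it), and that the strict inequality $F(p,x)>K$ survives passage to the infimum (you correctly isolate the attainment of the infimum at a hyperplane $\pi_{b(p,x)}\in{\mathcal P}(x)$ as the needed ingredient; without it one would only get convexity of the non-strict superlevel set). What the paper's approach buys in exchange is a stronger intermediate conclusion --- concavity of $F(\cdot,x)$ along Euclidean segments, not just convexity of its superlevel sets --- and a shorter write-up.
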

\begin{proof} Since $K$ is strictly convex, any $F$-geodesic is a Euclidean segment.
Given $p_1$ and $p_2$ in $S_K(x)$, parameterize the Euclidean segment $[p_1,p_2]$  with an affine parameter $t \in [0,1]$ by $s(t)$, with $s(0)=p_1$ and $s(1)=p_2$. We shall show that the function $t\mapsto F(s(t),x)$ is concave.

By Proposition \ref{prop:star}, we have
\[
F(s(t), x) = \inf_{\pi \in \mathcal{P}(x)} \log \frac{d(s(t), \pi)}{d(x, \pi)}.
\]
  Fix a supporting hyperplane $\pi$ in $\mathcal{P}$. Then \[
\frac{d}{dt}  \log \frac{d(s(t), \pi)}{d(x, \pi)}=  \frac{ \langle -\nu_\pi (s(t)), \dot{s}(t) \rangle}{ d(s(t), \pi ) } 
\]
and
\[
\frac{d^2}{dt^2} \log \frac{d(s(t), \pi)}{d(x, \pi)} = - \frac{\langle -\nu_\pi(s(t)), \dot{s}(t) \rangle^2}{d(s(t), \pi)^2}  \leq 0,
\]
where $\nu_\pi(x)$ is the unit vector 
at $x$ perpendicular to the hypersurface $\pi$ oriented toward $\pi$.  In particular $- \nu_\pi$ is the gradient vector of the function $d(x, \pi)$. 
The sign of the second derivative says that $\log \frac{d(s(t), \pi)}{d(x, \pi)}$ is concave in $t$ for each $\pi \in \mathcal{P}$.  By taking the infimum over $
\pi \in \mathcal{P}$, the resulting function $F(s(t),x)$ is concave in $t$. 

This implies that the super-level set $S_K(x)$ of the Funk distance $F(\cdot, x)$ is convex. 
\end{proof}

As an analogous situation in special relativity, the super-level set of the past-directed temporal distance measured from a fixed point in the Minkowski space-time ${\mathbb R}^{n, 1}$ is convex.  For example, the set below the past-directed hyperboloid: $S_1(0)= \{(x_0, x_1) \in {\mathbb R}^{1,1} \,\, | \,\, -x_0^2 + x_1^2 < -1, \ x_0<0 \}$ is convex.

\section{The timelike Euclidean relative Funk geometry}\label{s:ETRF}

Let $K_1$ and $K_2$ be two disjoint convex hypersurfaces in $\mathbb{R}^n$ that bound convex sets $I_1$ and $I_2$ respectively, with $\overline{K_1}= K_1\cup I_1$ and  $\overline{K_2}= K_2\cup I_2$ being the closures of $I_1$ and $I_2$ respectively.

 A \emph{timelike Euclidean relative Funk geometry} is associated with the ordered pair $K_1,K_2$. Its 
 underlying space is the subset $\Omega$ of $\mathbb{R}^n$, as pictured in Figure \ref{fig3}, defined as the union
 \[\Omega=\cup]a_1,a_2[,\]
the union being over the intervals $]a_1,a_2[ \subset\mathbb{R}^n$  such that $a_1\in K_1$, $a_2\in K_2$, $]a_1,a_2[\cap (K_1\cup K_2)=\emptyset$  and such that there is no supporting hyperplane $\pi$ to $K_1$ or to $K_2$ containing $]a_1,a_2[$.

 We let $K_1^2\subset K_2$ be the set of points $k_2\in K_2$ such that there exists a point $k_1\in K_1$ with $]k_1,k_2[\cap (K_1\cup K_2)=\emptyset$. We shall say that $K_1^2$ is the subset of $K_2$ \emph{facing $K_1$}.

\begin{figure}[!ht] 
\centering
   \psfrag{A}{\small The space $\Omega$ of the relative Funk metric}
\includegraphics[width=2in]{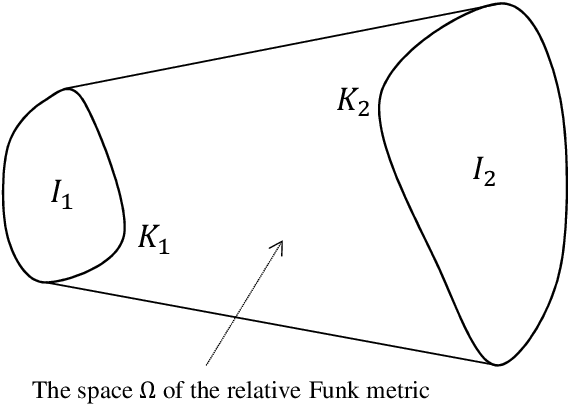}    
\caption{\small{The space underlying the relative Funk metric }}
\label{fig3}  
\end{figure}

In the rest of this section, the pair $K_1,K_2$ is always understood to be an ordered pair, even if the notation we use does not reflect this fact. For reasons that will become apparent soon, $K_1$ represents the past, and $K_2$ the future. We shall also say that $K_2$ is the future of $K_1$.

\begin{definition}[Order relation and relative future] \label{def:o} With  the above notation, for $p$ and $q$ in $\Omega$, we write $p<q$ if there exists an open Euclidean segment $]a_1,a_2[\subset \Omega$   with $a_i \in K_i$ such that the four points $a_1,p,q,a_2$ are collinear in that order, and such that $]a_1,a_2[$ is not contained in any supporting hyperplane of $K_1$ or of $K_2$. 

If $p<q$ then we say that  $q$ \emph{lies in the future} of $p$, and that $p$ \emph{lies in the past} of $q$.

\end{definition}

 We write $p\leq q$ if either $p<q$ or $p=q$.

  We denote by $\Omega_<$ (resp. $\Omega_{\leq}$) the set of ordered pairs $(p,q)$ in $\Omega\times\Omega$ 
satisfying $p<q$  (resp. $p\leq q$).   The set  $\Omega_<$ is disjoint from the diagonal set $\{(x, x) \,\,| \,\, x \in 
\Omega\} \subset \Omega\times\Omega$.

\begin{figure}[!ht] 
\centering
 \psfrag{p}{\small $p$}
   \psfrag{A}{\small The relative future of $p$}
    \psfrag{K1}{\small $K_1$}
     \psfrag{K2}{\small $K_2$}
\includegraphics[width=2in]{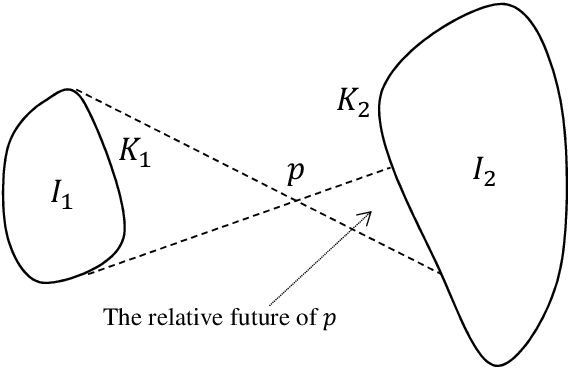}    \caption{\small {Relative future of $p$}}   \label{fig4}  
\end{figure}

  For every point $p$ in $\Omega$, its relative future set $\mathfrak{I}_2^+(p)$ is nonempty, open and connected. Note that this set  also depends on $K_1$ even though we do not include this information in the notation in order to make it lighter.
 
We shall sometimes use the word ``future" instead of the expression ``relative future" if the context is clear.

\begin{definition}[The relative future in $K_2$ of a point] For $p$  in $\Omega$, we consider the following subset  of $K_2$:  \[K_1^2(p)=\{a_2\in K_2  \ \mathrm{ such \  that }\   \exists a_1\in K_1 \ \mathrm{ with } \ p\in ]a_1,a_2[\subset \Omega\}\]
and we say that $K_1^2(p)$ is the \emph{relative future of $p$ in $K_2$}.

\end{definition}
\begin{definition}
 For $p$ in $\Omega$, we denote by  $\mathfrak{I}_2^+(p)$ the set of all points $q\in \Omega$ which are in the relative future of $p$, and we call this set the \emph{relative future of $p$}.
 \end{definition}
The \emph{relative future} of $p$ is represented in Figure \ref{fig4}.

\begin{figure}[!ht] 
\centering
 \psfrag{p}{\small $p$} 
   \psfrag{A}{\small $\widetilde{K_2}$}
    \psfrag{B}{\small $\widetilde{I_2}$}
    \psfrag{K1}{\small $K_1$}
     \psfrag{K2}{\small $K_2$}
\includegraphics[width=2 in]{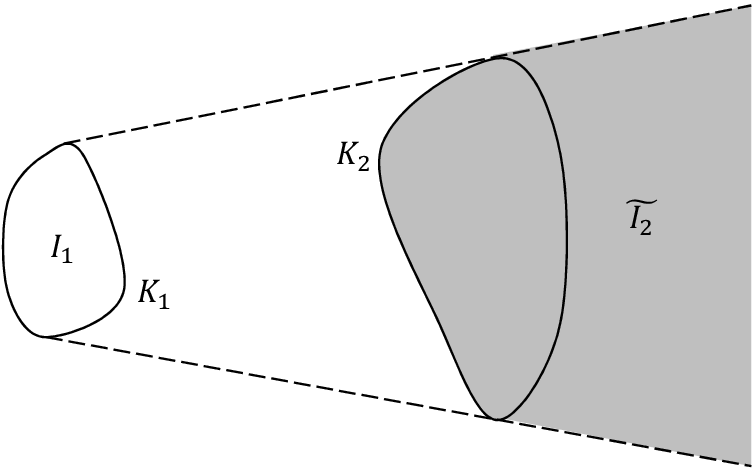}      \caption{\small{$\widetilde{K_2}$ is the boundary of the closure of $\widetilde{I_2}$}}    \label{fig1}  
\end{figure}
 
 In order to formulate the bases of the relative Funk geometry, we introduce the following notation.

 \medskip
 $\bullet$   $\widetilde{\mathcal{P}_2}$  is the set of supporting hyperplanes to $K_2$ at points in $K_1^2$. 

\medskip
 $\bullet$ $^iH_\pi^+$ is the open half-space,  bounded by a hyperplane $\pi$, and containing the convex set $I_i$.
 $^iH_\pi^-$ is the complementary half-space, namely, the open half-space bounded by $\pi$, not containing $I_i$. 

 \medskip
 $\bullet$ $\widetilde{I_2}=\cap {^2H_{\pi}^+}$
where $\pi$ varies in  $\widetilde{\mathcal{P}_2}$.
This is an open convex subset of $\mathbb{R}^n$ and it contains $I_2 = \cap {^2H_{\pi}^+}$ where the union is over $\pi$ varying in $\mathcal{P}_2$.

 \medskip
 $\bullet$  $\widetilde{K_2}$ is the boundary of the closure of $\widetilde{I_2}$. 
 ($\widetilde{I_2}$ are represented in Figure \ref{fig1}.)

 \medskip
$\bullet$  $\overline{\widetilde{K_2}}= \widetilde{K_2}\cup  \widetilde{I_2}$.

 \medskip
 $\bullet$  $\widetilde{\mathbb{P}_2}(p)$ is the set of  hyperplanes in $\mathbb{R}^n$ separating $p$ from $\widetilde{I_2}$.

For every element $\pi  \in \widetilde{\mathbb{P}_2}$,  $^2H^+_\pi$ is the open half-space bounded by the hyperplane $\pi$ and containing $\widetilde{I_2}$, and $^2H^-_\pi$ the open half-space bounded by $\pi$ and not containing $\widetilde{I_2}$. 
We have:
\[
\widetilde{I_2}= \cap_{\pi \in {\widetilde{\mathcal{P}_2}}}  {^2H^+_\pi} = \cap_{\pi \in {\widetilde{\mathbb{P}_2}}}  {^2H^+_\pi}
\]
 
and
\begin{equation}\label{eq:P2}
 \widetilde{\mathbb{P}_2}(p) = \{\pi \in { \widetilde{\mathbb{P}_2}} \,\, | \,\, p \in {^2H^-_\pi} \}.
\end{equation} 

 \begin{definition}[The relative past of a point]\label{def:rp}
 For $p\in \Omega$, the \emph{relative past of} $p$, denoted by ${\mathfrak I_2}^-(p)$,  is the set of points $q$ in $\Omega$ such that $p$ is in the relative future of $q$.
 \end{definition}
 
   The set ${\mathfrak I}_2^+(p)$ is an open subset of $\mathbb{R}^n$. It is characterized by the following: 
\begin{equation}\label{char:I}
{\mathfrak I}_2^+(p) = \mathrm{Int}\left(\{(\cap_{\pi \in \widetilde{\mathbb{P}_2}^c(p)} \overline{^2H^+_\pi}) \setminus \overline{\widetilde{K_2}} \} \cap   \{\cap_{\pi \in \widetilde{\mathbb{P}_1}(p)} \overline{^1H^-_\pi} \} \right)
\end{equation}
 where, as before, $\mathrm{Int}( \quad )$ denotes the interior of a set.  We recall that we are using the notation $$\mathrm{Int} \{ \cap_{\pi \in \widetilde{\mathbb{P}_2}(p)^c} \overline{^2H^+_\pi}) \setminus \overline{\widetilde{K_2}} \}$$ for the future set for the non-relative Funk geometry, namely when there is only one convex set $I_2$ ahead. Also note that the set $\mathrm{Int} \{  \cap_{\pi \in \widetilde{\mathbb{P}_1 }(p)} \overline{^1H^-_\pi} \} $ is the past set of $p$ for the non-relative backward Funk geometry, namely when there is only one convex set $I_1$ ahead.
 
\begin{proposition} \label{prop:eq}
We have the equivalences: 
\[
p<q \iff \widetilde{\mathbb{P}_2}^c(p) \cup \widetilde{\mathbb{P}_1}(p)  \subsetneq \widetilde{\mathbb{P}_2}^c(q) \cup \widetilde{\mathbb{P}_1}(q) \iff  \widetilde{\mathbb{P}_2}(p) \cup \widetilde{\mathbb{P}_1}^c(p)  \supsetneq \widetilde{\mathbb{P}_2}(q) \cup \widetilde{\mathbb{P}_1}^c(q).
\]

\end{proposition}

\begin{proof} 
Suppose $p < q$.  We claim that every $\pi \in \widetilde{\mathbb{P}_2}^c(p) \cup \widetilde{\mathbb{P}_1}(p)
$ is an element of $\widetilde{\mathbb{P}_2}^c(q) \cup \widetilde{\mathbb{P}_1}(q) $. This follows from the two inclusions $\widetilde{\mathbb{P}_2}^c(p) \subsetneq \widetilde{\mathbb{P}_2}^c(q)$ and 
$\widetilde{\mathbb{P}_1}(p) \subsetneq \widetilde{\mathbb{P}_1}(q)$.
The first inclusion follows from the fact that the ray from $p$ through $q$ hits the open convex set $\widetilde{I_2}$, and the second follows from the fact that the ray from $q$ through $p$ hits $\widetilde{I_1}$.

 To see the strict inclusion when $p < q$, choose a hyperplane  in $\widetilde{\mathbb{P}_2}^c(q) \cup \widetilde{\mathbb{P}_1}(q)
$ intersecting $]p,q[$.  Such a hyperplane is not in $\widetilde{\mathbb{P}_2}^c(p) \cup \widetilde{\mathbb{P}_1}(p)$.  

Next suppose $ \widetilde{\mathbb{P}_2}^c(p) \cup \widetilde{\mathbb{P}_1}(p) \subsetneq \widetilde{\mathbb{P}_2}^c(q) \cup \widetilde{\mathbb{P}_1}(q)
$.  Then the following inclusion 
\[
{\mathfrak I}_2^+(p) \supsetneq {\mathfrak I}_2^+ (q)
\]
follows from the characterization (\ref{char:I}) of 
${\mathfrak I}_2^-(x)$.

Hence $p$ is in the past of $q$, and thus $p < q$.

The second inclusion is simply the inclusion induced by taking the complements of the first inclusion.
\end{proof}

  We introduce the notation $$\mathbb{P}_{12}(p)=  \widetilde{\mathbb{P}_2}(p) \cup \widetilde{\mathbb{P}_1}^c(p).$$ 
Thus the statement of the proposition above becomes
\[
p<q \iff \mathbb{P}_{12}(p) \supsetneq \mathbb{P}_{12}(q).
\]

  \medskip
  $\widetilde{\mathcal{P}_2}(p)$ is the set of supporting hyperplanes to $\widetilde{K_2}$ so that
 \[
 \widetilde{\mathcal{P}_2}(p) =  \mathbb{P}_{12}(p) \cap \widetilde{\mathcal{P}_2} 
 \]

$\widetilde{\mathcal{P}_2}(p)$ is  the set of supporting hyperplanes to $K_2$ at the points of $K_1^2(p)$, the future set of $p$ in $K_1^2$.
We note that a supporting hyperplane $\pi$ to $\widetilde{I_2}$ that contains $p$ does not belong to $\widetilde{\mathcal{P}_2}(p)$.

\begin{corollary}\label{p:inclusion1}
For any two points $p$ and $q$ in $\Omega$, we have 
\[
p< q \Rightarrow \widetilde{\mathcal{P}_2}(p) \supset\widetilde{\mathcal{P}_2}(q).
\]
\end{corollary}

\begin{proof}
This follows  from the fact that $\widetilde{\mathcal{P}_2}(p) = (\mathbb{P}_{12}(p) \cap \widetilde{\mathcal{P}_2}) \supset (\mathbb{P}_{12}(q) \cap \widetilde{\mathcal{P}_2}) =  \widetilde{\mathcal{P}_2}(q)$.  
\end{proof}

In Corollary \ref{p:inclusion1}, the strict inclusion cannot be expected, as can be seen from the following example in ${\mathbb R}^2$ where we have $\widetilde{\mathcal{P}_2}(p) = \widetilde{\mathcal{P}_2}(q)$:

$K_1$ is the line with equation $\{y=-3\}$, bounding the convex half-space $\{y<-3\}$; 

$K_2$ is the convex curve in $\mathbb{R}^2$ which is the union of the rays $\{y=x, \ y>0\}$ and $\{y=-x, \ y>0\}$, $p=(0,-2)$ and $q=(0,-1)$.

\begin{corollary}\label{p:inclusion112}
 Let $p,q,r$ be three points in $\Omega$. If $p < q$ and $q < r$, then $p < r$.
\end{corollary}

\begin{proof}  This follows from  Proposition \ref{prop:eq} which gives:
\[
p< q \mbox{ and } q < r \Leftrightarrow \mathbb{P}_{12}(p) \supsetneq  \mathbb{P}_{12}(q) \supsetneq  \mathbb{P}_{12}(r).
\] 

\end{proof}

Now we can define the timelike relative Funk distance $F_1^2(p,q)$ on the subset $\Omega_{\leq}$ of $\Omega\times \Omega$.

\begin{definition}[The timelike relative Funk distance]\label{def:Funk1}
The timelike relative Funk distance  $F_1^2(p, q)$ is first defined on pairs of  
distinct points $p, q$ in $\Omega$ satisfying $p < q$ by the formula
\[
F_1^2(p, q) = \log \frac{d(p, b(p,q))}{d(q, b(p,q))}
\]
  where $b(p,q)$ is the first intersection point of the ray $R(p,q)$ with $K_2$. As before,
$d(\cdot \, ,\cdot)$ denotes the Euclidean distance.

Note that the value of $F_1^2(p, q)$ is strictly positive for any pair $p,q$ satisfying $p<q$. 

We extend the definition of $F_1^2(p, q)$ to the case where $p=q$, setting in this case $F_1^2(p, q) = 0$.
\end{definition}

Using the convexity of $\widetilde{K_2}$, we now give a variational characterization of the quantity $F_1^2(p, q)$. 

 Let $p$ and $q$ be two points in $\Omega$ such that $p<q$.  Let $\pi_0$ be a supporting hyperplane to $K_2$ at $b(p,q)$.  For $x$ in $\mathbb{R}^n$, let $\Pi_{\pi_0}(x)$ be the foot of the Euclidean perpendicular from the point $x$ onto that hyperplane. 
 
From the similarity of the two Euclidean triangles  $\triangle (p, \Pi_{\pi_0}(p), b(p,q))$
and $\triangle (q, \Pi_{\pi_0}(q), b(p,q))$, we have
\[
\log \frac{d(p, b(p,q))}{d(q,  b(p,q))} = \log \frac{d(p, \pi_0)}{d(q,  \pi_0)}.
\]

For any unit vector $\xi$ in $\mathbb{R}^n$ and for any $\pi \in {\mathcal P}(p)$, we set
\[T(p,\xi, \pi)=\pi \cap \{p+t \xi  \,\, | \,\, t > 0\}\] 
if this intersection is non-empty.

For $p <q$ in $\mathbb{R}^n$, consider the vector $\xi = \xi_{pq}= \frac{q-p}{\Vert q-p\Vert}$ where the norm is, as before, the Euclidean one.  

 We then have $T(p,\xi_{pq}, \pi_{b(p,q)}) = b(p, q)\in R(p, q) \cap K_2$.
 
 In the case where $\pi\in\widetilde{\mathcal{P}_2}(q)$ is not a supporting hyperplane of $\widetilde{K_2}$ at $b(p,q)$, 
 the point $T(p, \xi_{pq}, \pi)$ lies outside $\overline{\widetilde{K_2}}$ and,  again by the similarity of the Euclidean  triangles
$\triangle (p, \Pi_{\pi}(p), T(p, \xi_{pq}, \pi))$ and $\triangle (q, \Pi_{\pi}(q), T(p,\xi_{pq}, \pi))$, we get
\[
\frac{d(p, \pi)}{d(q, \pi)} =\frac{d(p, T(p,\xi_{pq},\pi))}{d(q, T(p,\xi_{pq}, \pi))}.
\]

As $\pi$ varies in $\widetilde{\mathcal{P}_2}(q)$, the farthest point from $p$ on the ray $R(p,q )$ of the form $T(p,\xi_{pq}, \pi)$ is $b(p, q)$, and this occurs when $\pi$ supports $\widetilde{K_2}$ at $b(p, q)$.
This in turn says that  a hyperplane $\pi_{b(p, q)}$ which supports $\widetilde{K_2}$ at $b(p, q)$ minimizes the ratio 
\[
\frac{d(p, T(p,\xi_{pq}, \pi))}{d(q, T(p,\xi_{pq}, \pi))} 
\]
among all the elements of $\widetilde{\mathcal{P}_2}(q)$
and thus we obtain  
\begin{proposition}\label{prop:star1} For all $p<q$, we have
 \[\log F_1^2(p,q) = \inf_{\pi \in \widetilde{\mathcal{P}_2}(q)} \log \frac{d(p, \pi)}{d(q,   \pi)}.
 \] 
\end{proposition}

Note that the statement is much similar to that of Proposition \ref{prop:star}.   The similarity illustrates that the relative timelike Funk geometry is a \textit{restriction} of the timelike Funk geometry.

The fact that the function $F_1^2(p,q)$ satisfies the  time inequality follows from an argument similar to the one used in proving  Proposition 
\ref{time-ineq}, in the light of Corollary \ref{p:inclusion1}.

\begin{proposition}[Time inequality]
For any three points $p, q$ and $r$ in $\Omega$, satisfying $p< q< r$, we have
\[
F_1^2(p, q) + F_1^2(q, r) \leq F_1^2(p, r).
\]
\end{proposition}
 
The following proposition is an analogue of Proposition \ref{prop:geo1} that concerns (non-relative) timelike Funk geometries, and it is proved in the same way:

\begin{proposition}[Geodesics]\label{prop:geo2} A timelike relative Funk geometry $F_1^2$ defined on a set $\Omega_\leq $ associated with two disjoint convex hypersurfaces $K_1$ and $K_2$ in $\mathbb{R}^n$ satisfies the following:
\begin{enumerate}
\item \label{eucl2} The Euclidean segments  in $\Omega$ that are of the form $[p,q]$ where $p<q$ are $F_1^2$-geodesics. 
\item Any Euclidean segment $[p,b)$ from a point $p$ in $\Omega$ to a point $b$ in $\partial K$, equipped with the metric induced from the timelike distance $F_1^2$, is isometric to a Euclidean ray. 
\item The Euclidean segments in (\ref{eucl2}) are the unique $F_1^2$-geodesic segments
if and only if the pair $(K_1,K_2)$ satisfies the following property: there is no nonempty open Euclidean segment  contained in the subset $K_1^2$ of points in $K_2$ facing $K_1$.
\end{enumerate}
\end{proposition} 

\section{The timelike Finsler structure of the timelike Euclidean relative Funk distance}\label{s:t-Funk-Finsler}

 In this section, as in  \S \ref{s:ETRF}, $\Omega$ is the space underlying the timelike Funk geometry associated with two disjoint convex hypersurface $K_1$ and $K_2$ in $\mathbb{R}^n$. We show that the timelike Euclidean relative Funk distance associated with $K_1$ and $K_2$ is timelike Finsler.

With every point $p$ in $\Omega$, we shall associate a  \emph{timelike Minkowski functional} $f_{F_{1}^{2}}(p,v)$ defined on the subset of the tangent space $T_p\Omega$ of $\Omega$ at $p$ consisting of the non-zero vectors $v$ satisfying 
\begin{equation}\label{eq:v}
 p + tv \in \mathfrak{I}_2^+(p) \mbox{ for some } t > 0
\end{equation}
where $\mathfrak{I}_2^+(p)$ is as before the future of $p$.

 We denote by  $C_2^+(p) \subset T_p \Omega$ the set of  vectors $v$ that satisfy Property (\ref{eq:v}) or are the zero vector.  We define the function $f_{F_{1}^{2}}(p,v)$ for  $p \in \Omega$ and  for any nonzero vector $v \in C_2^+(p)$  by the following formula:
\begin{equation}\label{eq:P21TRF}
f_{F_{1}^{2}}(p, v) = \inf_{\pi \in \widetilde{\mathcal{P}_2}(p)} \frac{\langle v, \eta_\pi \rangle }{d(p, \pi)},
\end{equation}
 where for each plane $\pi$ in $\widetilde{\mathcal{P}_2}(p)$, $\eta_\pi$ is the unit tangent vector at $p$ perpendicular to $\pi$ and pointing toward $\pi$. We define $f_{F_{1}^{2}}(p,0)=0$ if $v =0$. We shall show that this defines a timelike Minkowski functional on the tangent space of $\Omega$ at $p$ and that this functional is associated with a timelike Finsler geometry underlying the relative timelike Euclidean Funk distance $F_1^2$.

In the same way as for the Finsler structure of the timelike Euclidean (non-relative) Funk geometry (see Equations (\ref{eq:P1}) and (\ref{eq:P211})), we have, for every nonzero vector $v \in C^+(p)$:
\begin{equation}\label{eq:P333}
f_{F_{1}^{2}}(p, v) = \frac{\|v\|}{\inf \Big[ t \,\, | \,\, p + t \frac{v}{\|v\|} \in  \overline{\widetilde{K_2}} \Big]}=\sup\{\tau:p+v/\tau \in \overline{\widetilde{K_2}}\}.
\end{equation}

The following can be easily checked.

\begin{proposition}
The functional $f_{F_{1}^{2}}(p, v)$ defined on the open cone $C_2^+(p)$ in $T_p \Omega \cong {\mathbb R}^n$ satisfies all the properties required in Definition \ref{def:Min}  by a timelike Minkowski functional.  
\end{proposition}

Now we repeat the argument in \S \ref{s:Funk-Finsler}, to set up a timelike space using the Finsler structure. We say that a piecewise  $C^1$ curve  $\sigma: [0,1]\to  \mathbb{R}^n$, $t\mapsto \sigma(t)$ is {\it timelike} if at each 
time $t\in [0,1]$ the tangent vector $\sigma'(t)$ is an element of the cone $C_2^+(\sigma(t)) \subset T_{\sigma(t)} \mathbb{R}^n$. We shall follow the same scheme as in \S \ref{s:Timelike-Finsler} to show that the timelike Euclidean relative Funk distance is Finsler.

\begin{definition}[The partial order relation]
Suppose that $p$ and $q$ are two points in $\mathbb{R}^n$. We write $p \prec q$, and we say that \emph{$q$ is in the $\prec$-future of $p$}, if there exists a timelike piecewise $C^1$ curve  $\sigma: [0,1] \to \mathbb{R}^n$ joining $p$ to $q$.
\end{definition}

\begin{proposition}
The two order relations $<$ and $\prec$ coincide, namely, for any two points $p$ and $q$ in $\Omega$, we have 
\[
p < q \Leftrightarrow p \prec q.
\]
\end{proposition} 
The proof is the same as that of Proposition \ref{order-equiv} except  that $\mathfrak{I}^+(p)$ needs to be replaced by $\mathfrak{I}_2^+(p)$.

Similarly to what we did in \S \ref{s:Funk-Finsler}, we denote by $\delta_1^2$ the timelike intrinsic distance function associated with this timelike Finsler structure:
\begin{equation}
\delta_1^2(p, q) = \sup_{\sigma} {\rm Length}(\sigma)
\end{equation}
where the supremum is taken over all the timelike piecewise $C^1$ curves $\sigma:[0,1]\to \Omega$ satisfying $\sigma(0)=p$ and $\sigma(1)=q$.  By a proof similar to that of Lemma \ref{lemma:finite}, the intrinsic distance $\delta(p, q)$ for $p<q$ is finite. 

Thus, the domain of definition $\Omega_<$ defined with the partial order $<$ for  the timelike Funk distance $F_1^2$  and the domain of definition $\Omega_\prec$ for the timelike distance function $\delta_1^2$  coincide. We shall prove the equality $\delta_1^2(p, q) =F_1^2 (p, q)$ for any pair $p < q$ in $\Omega_{\leq}$. We state this as follows:

\begin{theorem}\label{th:distance11}
The value of the timelike distance $\delta_1^2(p, q)$ for a pair $(p,q)\in \Omega_{\leq}$ coincides with $F_1^2(p, q)$. That is, we have
\[
F_1^2(p, q) = \delta_1^2(p, q).
\]
\end{theorem}

In other words, we have the following
\begin{theorem}\label{Funk-Finsler-2} 
The relative timelike Funk geometry is a timelike Finsler structure with associated Minkowski functional $f_{F_{1}^{2}}(p,v)$.
\end{theorem}

Note that with this identification $F_1^2=\delta_1^2$, given a pair of points $p, q$ satisfying $p < q$, there always exists a distance-realizing (length-maximizing) geodesic from $p$ to $q$, since the Euclidean segment $[p,q]$ is an $F_1^2$-geodesic.
% In the theory of relativity, the existence of geodesics is established by imposing the so-called global hyperbolicity  (\cite{Penrose} or \cite{HE}) of the space-time.  
%The global hyperbolicity implies  two conditions: the non-existence of closed causal curves and the sequential compactness of  geodesics in $C^0$-topology. 
% In our setting, the first condition is satisfied, and the second follows from the fact that $\overline{s_M(p)} \subset T_p \Omega$ is closed and bounded.

\begin{proof}[Proof of Theorem \ref{th:distance11}] The proof is similar to the one of Theorem \ref{th:distance}:
Given a pair $(p, q)$ with $p<q$, consider the geodesic ray $R(p, q)$ from $p$ through $q$ and let $b(p, q) \in K_1^2$ be the first intersection point of this ray with the convex set  $\overline{\widetilde{K_2}}$. Parameterize proportionally to arc-length the Euclidean segment $[p,q]$ by a path $\sigma(t)$ with parameter $t$ in $[0,1]$, with $\sigma(0)=p, \sigma(1)=q.$  Then we have 
\[
\int_0^1 f_{F_1^2}(\sigma(t), \sigma'(t)) \, dt = \log \frac{d(p, b(p,q))}{d(q, b(p,q))} = F_1^2(p, q),
\] 
since 
\[
\frac{d}{dt} \log \frac{d(p, b(p,q))}{d(\sigma(t), b(p,q))} = f_{F_1^2}(\sigma(t), \sigma'(t)).
\]
Taking the supremum over the set of paths from $p$ to $q$, we obtain the inequality
\[
\delta_1^2(p, q) \geq  F_1^2(p, q).
\]

We need to show a monotonicity lemma similar to Lemma \ref{lem:another1} for the intrinsic distance $\delta_1^2$.

Let 
$\widehat{I}_2 \supset I_2$ be an open convex set containing $I_2$, let  $\widehat{K}_2$ be its bounding hypersurface,  $\widehat{P}_1^2$ the timelike Minkowski functional associated with the pair $(I_1, \widehat{I}_2)$ and $\widehat{\delta}_1^2$ the associated intrinsic distance. (Note   that the domains of definition of   $\widehat{P}_1^2$ and $\widehat{\delta}_1^2$ contain those of $f_{F_{1}^{2}}$ and $\delta_1^2$ respectively.) 
\begin{lemma}\label{lem:another2}   For $(p,q)$ in the domains of definition of both intrinsic distances $\delta_1^2$ and $\widehat{\delta}_1^2$, we have
\[
\widehat{\delta}_1^2(p, q) \geq \delta_1^2(p, q).
\]
\end{lemma}

The proof is, with an adaptation of the notation, the same as that of Lemma \ref{lem:another1}.

\noindent {\it Proof of Theorem  \ref{th:distance11} continued}.--- For $(p, q) \in \Omega_<$,  let 
\[\widehat{I}_2 = H^+_{\pi_{b(p, q)}},\]
 where $H^+_{\pi_{b(p, q)}}$  is 
the open half-space  bounded by a hyperplane $\pi_{b(p, q)}$ supporting $K_1^2$ at $b(p, q)$ and containing $\widetilde{I}_2$. The open set $\widehat{I}$ is equipped with its intrinsic distance $\widehat{\delta}$. We now apply Lemma \ref{lem:another1} to this setting where a convex set  $\widehat{I}_2$  
contains $I_2$, and obtain $\widehat{\delta}_1^2 \geq \delta_1^2$.

For the open half-space $\widehat{I}_2 = H^+_{\pi_{b(p, q)}}$, the values of $F_1^2(p, q)$, $\widehat{F}_1^2(p, q)$ and $\widehat{\delta}_1^2(p, q)$ all coincide. Indeed, under the hypothesis $\widehat{I}_2 = H^+_{\pi_{b(p, q)}}$, the set $
\widetilde{\mathcal{P}_2}$ of supporting hyperplanes consists of the single element $\pi_{b(p, q)}$, and the Euclidean segment $\sigma$ from $p$ to $q$ is length-maximizing, since every timelike path for $\widehat{I}_2$ is $\widehat{F}_1^2$-geodesic. 
 
By combining the above observations, we have 
\[
 F_1^2(p, q) = \widehat{F}_1^2(p, q) = \widehat{\delta}_1^2(p, q) \geq \delta_1^2(p, q) \geq F_1^2(p, q)
\]
and  the equality $\delta_1^2(p, q) =F_1^2(p, q)$ follows.
\end{proof}

\section{The timelike Euclidean relative reverse  Funk geometry and its Finsler structure}\label{s:TRRF}
We continue using the notation of \S \ref{s:ETRF} and \S \ref{s:t-Funk-Finsler} associated with two convex subsets $K_1$ and $K_2$ of $\mathbb{R}^n$.

\begin{definition} \label{def:123} The timelike Euclidean relative reverse Funk geometry is the function $\overline{F_{1}^{2}}$ defined for $p$ and $q$ in $\Omega$ satisfying $p<q$  by
\begin{equation}
 \overline{F_{1}^{2}}(p,q)=F_2^1(q,p)
\end{equation}
where $F_2^1(q,p)$ is the timelike Euclidean relative Funk metric associated with the pair $(K_2, K_1)$, that is, here, the convex set $K_1$ represents the future and the convex set $K_2$ represents the past, and where $p$ lies in the future of $q$ relatively to this ordered pair. (In particular, the domain of definition of  $\overline{F_{1}^{2}}$ is equal to the domain of definition of $F_1^2$.) 
\end{definition}

With the notation introduced in \S \ref{s:t-Funk-Finsler}, we have:   
\begin{equation} \label{future-past-symmetry} 
q \in \mathfrak{I}_2^+(p) \Leftrightarrow p \in \mathfrak{I}_1^+(q).
\end{equation}

For every point $p$ in $\Omega$, we have a \emph{timelike Minkowski functional} $f_{F_{2}^{1}}(p,v)$ defined on the subset of the tangent space $T_p\Omega$ of $\Omega$ at $p$ consisting of the non-zero vectors $v$ satisfying 
\[
 p + tv \in \mathfrak{I}_1^+(p) \mbox{ for some } t > 0.
\]
 We denote by  $C_1^+(p) \subset T_p \Omega$ the union of tangent vectors $v$ that satisfy this property or are the zero vector.  From the definition, there is a symmetry between $C_1(p)$ and $C_2(p)$ in the sense that
 \[
 v \in C_1(p) \Leftrightarrow -v \in C_2(p).
 \]
This follows from the equivalence (\ref{future-past-symmetry}) remarked above.
 
 We define the function $f_{F_{2}^{1}}(p,v)$ for  $p \in \Omega$ and for any nonzero  $v \in C_1^+(p)$  by the following formula:
\begin{equation}\label{eq:P21TRRF}
f_{F_{2}^{1}}(p, v) = \inf_{\pi \in \widetilde{\mathcal{P}_1}(p)} \frac{\langle v, \eta_\pi \rangle }{d(p, \pi)}
\end{equation}
 where for each hyperplane $\pi$ in $\widetilde{\mathcal{P}_1}(p)$, $\eta_\pi$ is the unit tangent vector at $p$ perpendicular to $\pi$ (with respect to the underlying Euclidean metric) and pointing toward $\pi$. 
 
 We extend the definition by setting $f_{F_{1}^{2}}(p,0)=0$ when $v =0$. 
 
 In the same way as for the geometries that were considered previously, this defines a timelike Minkowski functional, and this functional is associated with a timelike Finsler geometry underlying the timelike Funk distance $F_2^1$. 

We shall use the following definition in \S \ref{def:H}:
\begin{definition} The timelike Minkowski functional $f_{\overline{F_{2}^{1}}}(p,v)$ for the timelike Euclidean relative reverse Funk geometry $\overline{F_{1}^{2}}$ is the function
\[
f_{\overline{F_{2}^{1}}}(p,v) = f_{F_{2}^{1}}(p, -v) .
\]
for $v \in C_2^+(p) (= - C_1^+(p)) $.
\end{definition}

\section{The timelike Euclidean Hilbert geometry}\label{def:H}

We continue using the notation introduced in \S \ref{s:ETRF}:  $I_1$ and $I_2$ are
two disjoint open (possibly unbounded) convex sets  in ${\mathbb R}^n$ bounded by disjoint convex hypersurfaces $K_1$ and $K_2$ respectively and 
 $\overline{K_1}= K_1\cup I_1$ and  $\overline{K_2}= K_2\cup I_2$.
 The latter are the closures of $I_1$ and $I_2$ respectively.

We shall define the timelike Euclidean Hilbert geometry $H(p,q)$  associated with the ordered pair $K_1,K_2$. Its underlying space $\Omega$ is the same as the one of  the timelike Euclidean relatively Funk geometry defined in \S \ref{s:ETRF}, that is, $\Omega$ is the union in $\mathbb{R}^n$ of the open segments of the form $]a_1,a_2[$  such that $a_1\in K_1$, $a_2\in K_2$ satisfying $]a_1,a_2[\cap (K_1\cup K_2)=\emptyset$ for $i=1,2$ and for which there is no supporting hyperplane $\pi$ to $K_1$ or to $K_2$ with $]a_1,a_2[\subset \pi$.
   
 Referring to \S \ref{s:ETRF}, we shall use the two timelike relative Funk metrics, $F_1^2$ and $F_2^1$, both defined on $\Omega$,  but we shall always consider $K_1$ as representing the past and $K_2$ the future, except if the contrary is explicitly specified. 
 
 In particular, the \emph{partial order relation} on $\Omega$ that underlies the timelike Hilbert geometry $H(p,q)$ is the same as the one associated with the relative Euclidean Funk metric  with respect to the pair $K_1,K_2$ as an ordered pair. The relative future and relative past of a point $p$ in $\Omega$ are defined accordingly, as in Definitions \ref{def:o} and \ref{def:rp}.

\begin{definition}[Timelike Euclidean Hilbert geometry]\label{def:Hilbert}
The timelike Euclidean Hilbert distance is defined on ordered pairs $(p,q)\in \Omega$ satisfying $p<q$ by
 \[H(p,q)=\frac{1}{2}(F_1^2(p,q)+\overline{F_1^2}(p,q)).\]

 The definition is extended to the case where $p=q$ by setting $H(p,q)=0$.
 \end{definition}
   
   Even though this definition of $H$ depends on the \emph{ordered 
   pair} $K_1, K_2$, it is clear that its values are independent of the order. For that reason, we choose the notation $H$.

The fact that the timelike Hilbert geometry satisfies the time inequality follows from the definition of the timelike Hilbert geometry as a sum of two timelike relative Funk geometries that both satisfy the time inequality.
%
% In particular, the timelike space together with the partial order relation thus defined satisfies the three axioms $T_1, T'_2$ and $T'_3$ of Busemann's \cite{B-Timelike}.  
%

The timelike Hilbert geometry satisfies some properties which follow from those of a timelike Funk geometry. In particular, we have the following:

\begin{proposition}\label{Hibert-geo}
(a) In a timelike Hilbert geometry $H$ associated with an ordered pair of convex hypersurfaces $K_1,K_2$, the Euclidean segments of the form $]a_1,a_2[$ such that
\begin{enumerate}
\item $a_1 \in K_1$ and $a_2 \in K_2$; 
\item $]a_1,a_2[$  is not contained in any supporting hyperplane to $K_1$ or to $K_2$;
\item the open segment $]a_1,a_2[$ is in the complement $\Omega$ of $\overline{K_1}\cup \overline{K_2}$ 
\end{enumerate}
 are $H$-geodesics. Furthermore, each such geodesic is isometric to the real line.
(We recall that, as it is always the case in timelike spaces, it is understood that the segments $]a_1,a_2[$ are oriented from $a_1$ to $a_2$. Traversed in the reverse sense, they are not geodesics.)

 (b) The following two properties are equivalent:
 \begin{enumerate}
 \item the oriented Euclidean segments contained in the segments  of the form $]a_1,a_2[$  satisfying the above three properties are the unique $H$-geodesics;
 \item 
there are no segments $]a_1,a_2[$ satisfying the above three properties, with $a_1$ in the interior of an open nonempty Euclidean segment $J_1\subset K_1$ and $a_2$ in the interior of an open nonempty segment $J_2\subset K_2$, with $J_1$ and $J_2$ coplanar.
\end{enumerate}
\end{proposition}
The proof is an adaptation of that of the non-timelike Hilbert metric (cf. \cite{G} or \cite{PT}), and we omit it.
 
We now express the timelike Hilbert distance using the cross ratio, as in the usual (non-timelike) Hilbert geometry.

Recall that if $a,b,c,d$ are four distinct points lying in that order on a Euclidean line, their cross ratio $[a,b,c,d]$ is defined by
\begin{equation}\label{eq:cross} [a,b,c,d]=\frac{\vert b-d\vert}{\vert c-d\vert}\frac{\vert c-a\vert}{\vert b-a\vert}.
\end{equation}

The following proposition follows easily  from the definition of the cross ratio and the timelike Euclidean Hilbert distance:

\begin{proposition}\label{prop:cross1}
For any two points $p$ and $q$ in $\Omega$ satisfying $p<q$,  their timelike Euclidean Hilbert distance is also given by
\[H(p,q)= \frac{1}{2}\log [a_1,p,q,a_2]\]
where $a_1$ and $a_2$ satisfy  $[a_1,a_2]\cap K_i=a_i$ for $i=1,2$.
\end{proposition}
 
 With this form of the definition of the timelike Euclidean Hilbert geometry, we see that the projective transformations of $\mathbb{R}^n$ that preserve (setwise) each of the two convex sets  $I_1$ and $I_2$ are isometries for the timelike Hilbert distance. 
 %(Note that strictly speaking we deal with projective transformations in the setting of the projective space, and in fact, we are talking here about transformations of $\mathbb{R}^n$ that preserve the points at infinity, with respect to the natural inclusion of $\mathbb{R}^n$ in the projective space $\mathbb{R}P^n$). 
 
We point out two 2-dimensional examples of timelike Hilbert geometries. Higher-dimensional analogues also hold.
\begin{example}[The strip]\label{ex:strip}
Let $\Omega$ be a region contained by two parallel lines in the plane ${\mathbb R}^2 = \{(x, y)\}$, namely, $\Omega$ is the complement of two 
half-spaces $H_1 = \{ y \leq -1 \}, H_2 = \{ y \geq 1\}$. Then any timelike curve is a geodesic for the timelike Hilbert geometry. In this setting,  a curve is timelike if at each point the tangent  vectors are not horizontal. 

Consider the nearest point projection $\Pi: \Omega \rightarrow (-1, 1)$ onto the interval (-1,1) of the $y$-axis.  Then the Hilbert distance $H(p, q)$ for $p< q$ is equal to $H_{(-1, 1)}(\Pi(p), \Pi(q))$ where 
\[
H_{(-1, 1)}  (a, b)= \frac12  \log \frac{a -1}{b-1}  \frac{b+1}{a+1}
\]
 is the Hilbert distance for the interval.  This metric is sometimes called the ``one-dimensional hyperbolic metric" as this is the Klein-Beltrami model of the hyperbolic space ${\mathbb H}^1$.  Notice that $\Omega$ is concave as well as convex in ${\mathbb R}^2$.
\end{example}

\begin{example}[The half-space] 
The half-space corresponds to the limiting case of the strip discussed in Example \ref{ex:strip} above, $\Omega =  {\mathbb R}\times (-a, 1) $, as $a \rightarrow \infty$.  Then the Hilbert timelike distance 
\[
H_{(-a, 1)}(p, q) = \frac12 \log \frac{\Pi(p) -1}{\Pi(q)-1}  \frac{\Pi(p)+a}{\Pi(q)+a}
\]
converges to (half of) the timelike Funk distance
\[
F(p, q)= \log  \frac{\Pi(p) -1}{\Pi(q)-1} 
\]
which is the timelike Funk distance for the half-space ${\mathbb R}^2 \setminus \{ y \geq 1 \}$. We will come back to this example later.
\end{example}

\begin{remark}  Our approach to the timelike Euclidean Hilbert geometry, based on the relative Euclidean Funk geometry, is different from that of Busemann in \cite{B-Timelike}. In fact, 
 Busemann, in \S 8 of his paper \cite{B-Timelike}, works in the projective space, and the geometry which he obtains is a \emph{local} timelike geometry (the order relation is only locally defined). Thus, the Hilbert geometry he obtains is \emph{locally} timelike.
 
 One important result that Busemann obtains (his Theorem (3) p. 47) is that in the case where the convex sets $K_1$ and $K_2$ are strictly convex, the isometry group of a locally timelike Hilbert geometry is the group of restrictions of projective transformations of the ambient projective space that preserve the given convex set.  
 
Busemann then defines a timelike Funk geometry 
 associated with a convex hypersurface $K$ contained in an affine space ${\mathbb A}^n$  using his locally timelike
 Hilbert geometry, namely, it becomes the geometry  associated with a pair $K_1,K_2$ where $K_1$ is the hyperplane at infinity ${\mathbb R} {\rm P}^{n-1} $
 in the projective space ${\mathbb R} {\rm P}^n={\mathbb A}^n \cup {\mathbb R} {\rm P}^{n-1}$.  The set $K_1$ is the collection of points which are ``infinite distance away'' from any 
 pair of points in ${\mathbb A}^n \setminus K_1$, in the sense that for any pair of points $p,q$ with $p < q$ (the order relation when $K_2$ is the future set), we have $\frac{d(p, a_1)}{d(q, a_2)} = 1$. In that case, and using the notation of Definition \ref{def:Hilbert}, the Hilbert distance from $p$ to $q$ associated with the pair $K_1,K_2$ is just  the Funk distance from $p$ to $q$ associated with the convex set $K_2$, up to a constant.
 \end{remark}
  
 \section{The timelike Finsler structure of the timelike Hilbert geometry}\label{s:TFSTHG}
In this section, we show that the  timelike Hilbert distance $H(p, q)$ introduced in \S \ref{def:H} is a timelike Finsler metric, and we give its timelike Minkowski functional. 

We continue using the notation introduced in \S \ref{s:t-Funk-Finsler} for the Finsler structure of the timelike Euclidean relative Funk distance.
 
Consider a point $p$ in $\Omega$ so that the associated cone $C_2^+(p)\subset T_p(\Omega)$ (which, we recall, is equal to the cone  $- C_1^+(p)$) is nonempty.  We denote by $C(p)$ the set $C_2(p) = -C_1(p) \subset T_p \Omega$.  Following the notation of \S\,\ref{s:Funk-Finsler} that concerns the infinitesimal Finsler metric associated with a timelike Funk geometry, we define a linear functional  on $C(p)$ by the formula: 
\begin{equation} \label{eq:Hilbert2}
f_H(p, v) = f_{F_{1}^{2}}(p,v) + f_{F_{2}^{1}}(p,-v),\end{equation}
or, equivalently,
\begin{equation} \label{eq:Hilbert1}
f_H(p, v) = f_{F_{1}^{2}}(p,v) + f_{\overline{F_{1}^{2}}}(p,v).\end{equation}
where $f_{F_{1}^{2}}$ and $f_{\overline{F_{1}^{2}}}$ are the timelike Minkowski norms on the tangent spaces associated with the timelike relative Funk geometry and the timelike reverse Funk geometry defined by $K_1$ and $K_2$.

 Now we follow the outline used in \S \ref{s:Funk-Finsler}, to set up a timelike space using the Finsler structure $f_H$. We say that a piecewise  $C^1$ curve  $\sigma: [0,1]\to  \mathbb{R}^n$, $t\mapsto \sigma(t)$ is {\it timelike} if at each 
time $t\in [0,1]$ the tangent vector $\sigma'(t)$ is an element of the cone $C_2^+(\sigma(t)) \subset T_{\sigma(t)} \mathbb{R}^n$.

\begin{definition}[The partial order relation]
Suppose that $p$ and $q$ are two points in $\Omega$. We write $p \prec q$, and we say that \emph{$q$ is in the $\prec$-future of $p$}, if there exists a timelike piecewise $C^1$ curve  $\sigma: [0,1] \to \mathbb{R}^n$ joining $p$ to $q$.
\end{definition}
 
As in the situation studied in \S \ref{s:Funk-Finsler}, the following holds in the present setting as well, and the proof is the same as that of Proposition \ref{order-equiv}, with $\mathfrak{I}^+(p)$ replaced by $\mathfrak{I}_2^+(p)$. 

\begin{proposition}
The two order relations $<$ and $\prec$ coincide; namely, for any two points $p$ and $q$ in $M$, we have 
\[
p < q \Leftrightarrow p \prec q.
\]
\end{proposition} 

As in \S \ref{s:Timelike-Finsler}, we denote by $\delta$ the timelike intrinsic distance function associated with this timelike Finsler structure:
\begin{equation}
\delta(p, q) = \sup_{\sigma} {\rm Length}(\sigma)
\end{equation}
where the supremum is taken over all the timelike piecewise $C^1$ curves $\sigma:[0,1]\to \Omega$ satisfying $\sigma(0)=p$ and $\sigma(1)=q$.  As in Lemma \ref{lemma:finite}, we prove that for all $p<q$, we have $\delta(p, q)<\infty$. This implies that the domain of definition $\Omega_<$ associated with the partial order $<$ for  the timelike Hilbert distance $H$  and the domain of definition $\Omega_\prec$ for the  timelike distance function $\delta$  coincide. 

Now we prove the equality $\delta(p, q) =F(p, q)$ for any pair $(p,q)$ satisfying $p < q$ in $\Omega_< = \Omega_\prec$. We state this as follows:

\begin{theorem}\label{Hilbert-Finsler} 
The timelike Hilbert geometry has an underlying timelike Finsler structure given by the 
Minkowski functional $f_H$ defined in (\ref{eq:Hilbert1}).
\end{theorem}

\begin{proof} 
Let $(p, q)$ be an element in $\Omega_<$. In what follows, when we talk about a Euclidean segment $[p,q]$ joining $p$ to $q$, it is understood that this segment is oriented from $p$ to $q$. We parametrize such a segment $[p,q]$ by $x(t)$, $0\leq t\leq 1$ and the same segment traversed in the opposite direction, $[q,p]$, by $y(t)=x(1-t)$.

Recall that the Euclidean segment $[p,q]$ is an $F_1^2$-geodesic, and the Euclidean segment $[q,p]$, is an $F_2^1$-geodesic.
Thus, we have
\[F_1^2(p,q)=\int_{[p,q]} f_{F_{1}^{2}}(x,x')dx\]
and 
\[F_2^1(q,p)=\int_{[q,p]} f_{F_{2}^{1}}(x,x')dx.\]

Since the segment $[q,p]$ is the interval $[p,q]$ traversed in the opposite direction, we have
\[
\int_{[q, p]} f_{F_{2}^{1}}(y,y')dy=\int_{[p,q]} f_{\overline{F_{1}^{2}}}(x,x')dx.\]
 Thus, we obtain
\begin{equation} \label{eq:H1}
H(p,q)=\int_{[p,q]} \left(f_{F_{1}^{2}}(x,x')+ f_{\overline{F_{1}^{2}}}(x,x')\right)dx\leq \delta(p,q).
\end{equation}
Furthermore, if $\gamma$ is now an arbitrary path in the domain of definition of $H$ joining $p$ to $q$, we have 
\begin{equation}\label{eq:H21}
\int_{\gamma}f_{F_{1}^{2}}(x,x')dx \leq \int_{[p,q]}f_{F_{1}^{2}}(x,x')dx
\end{equation}
and 
\begin{equation} \label{eq:H2}  
\int_{\gamma}f_{\overline{F_{1}^{2}}}
(x,x')dx \leq \int_{[p,q]}f_{\overline{F_{1}^{2}}}
(x,x') dx.
\end{equation}
Adding (\ref{eq:H21}) and (\ref{eq:H2}), we get
\begin{equation}\label{eq:K1}
\int_{\gamma}f_{F_{1}^{2}}(x,x')dx+\int_{\gamma}f_{\overline{F_{1}^{2}}}
(x,x')dx\leq \int_{[p,q]}f_H(x,x')dx=H(p,q).
\end{equation}

This  shows that $H$ is timelike Finsler, with its timelike Minkowski functional at each point $x$ given by $f_H(p, v)$.

\end{proof}

The timelike Finsler structure $P_H$ is well-behaved in the sense that the linear functional 
\[
P_H(p, \cdot): C(p) \rightarrow {\mathbb R}
\]
is a timelike Minkowski functional (in the sense of Definition \ref{def:Min}) defined on the open cone $C(p) = C_2^+(p)= -C_1^+(p)$ in $T_p \Omega$.

\section{The timelike spherical relative Funk geometry}\label{s:TSRF}

 In this section and in the rest of this paper, the ambient space $\mathbb{R}^n$ is replaced by the sphere $S^n$. 
 We equip $S^n$ with its canonical metric for which it becomes a Riemannian manifold of constant curvature 1 and of diameter $\pi$. The shortest lines (geodesics) connecting two points of $S^n$ are pieces of great circles. Great circles have length $2\pi$. We first discuss a few basic notions concerning convexity on the sphere and we start with the definition:
 
 \begin{definition}[Convex subset]
 A convex subset of $S^n$ is a subset $I\subset S^n$ such that $I\not=S^n$ and such that for $x$ and $y$ in $I$, any shortest line joining them is contained in $I$.
 \end{definition}
It follows from this definition that $I$ is contained in an open hemisphere of $S^n$, that is, one of the two half-spaces bounded by a \emph{great hypersphere} $\pi$, that is, an $(n-1)$-dimensional sphere totally geodesically embedded in $S^n$. Also note that when $S^n$ is realized as the unit sphere of $\mathbb{R}^{n+1}$, then a great hypersphere of $S^n$ is the intersection of this sphere with a hyperplane passing through the origin of the coordinates.
Each great hypersphere $\pi$ has two poles.

Let $j$ be the stereographic projection  from the center of $S^n$, defined on the hemisphere $\mathbb{U}$ containing the convex set $I$, onto the tangent plane $T_N S^n \subset \mathbb{R}^{n+1}$ at the pole $N$ of $\pi$ belonging to $\mathbb{U}$.  The image $j(I)$ of the convex set $I$ is thus regarded as  a convex subset of $\mathbb{R}^n$. This projection sends the great circles of $S^n$ to the lines in $\mathbb{R}^n$, and the convexity properties of subsets of $S^n$ can be translated into convexity properties of their images by the map $j$. In particular, a subset $I$ of $S^n$ is convex if and only if its image $j(I)\subset \mathbb{R}^n$ is convex.
 
 We have to introduce some terminology regarding the sphere $S^n$ in order to define the spherical Funk geometry. This is analogous to the terminology we introduced in the Euclidean case.
 
 A \emph{supporting hyperplane} $\pi$  to an open convex subset $I$ of $S^n$ is a great hypersphere whose intersection with the closure $\overline{I}$ of $I$ is nonempty and such that $I$ is contained in one of the two connected components of the complement of $\pi$ in $S^n$. We call this component $H^+_\pi$ and we call the other component $H^-_\pi$. Each open convex subset of the sphere has a supporting hyperplane at each point of its boundary. In case we are given a collection $I_i$ of convex sets, and when we talk about their respective supporting hyperplanes and the connected components of the complements of these hyperplanes, then the upper left-side index  of $^iH_\pi^\pm$ denotes the relevant convex set $I_i$. 
 
  In the rest of this section, $I_1$ and $I_2$ are open convex subsets of $S^n$ whose bounding convex hypersurfaces are called $K_1$ and $K_2$ respectively. We have $\overline{K_i}= I_i\cup K_i$ for $i=1,2$.  We shall also say that a supporting hyperplane to $I_i$ is a supporting hyperplane to $K_i$ or to $\overline{K_i}$, depending on the subset of the sphere that we want to stress on.
  
 We shall always assume that the property expressed in the following definition is satisfied by $K_1$ and $K_2$.
 
 \begin{definition}\label{rel-pos}
 We say that the two hypersurfaces $K_1,K_2$ are \emph{in good position} if the following two properties are satisfied: 
 \begin{enumerate}
 \item $\overline{K_1}\cap \overline{K_2}=\emptyset$;
 \item For any great circle $C$ such that $C\cap K_i\not=\emptyset$ for $i=1,2$, the set $C\setminus (\overline{K_1}\cup \overline{K_2})$ is the union of two spherical segments of length $<\pi$. 
 \end{enumerate}
 \end{definition}

 \begin{proposition} \label{prop:anti} Assume $K_1, K_2$ are in good position. Then, the union  $I_1\cup I_2$ contains a pair of antipodal points, each of which belongs to one of the sets $I_1, I_2$.
 \end{proposition}
 \begin{proof}
 Take any great circle $C$ on $S^n$ intersecting the two convex sets $\overline{K_1}$ and $\overline{K_2}$. By assumption, $C$ intersects $S^n\setminus (\overline{K_1}\cup \overline{K_2})$ in two spherical segments, each of length $<\pi$. Consider one of these two segments and let $k_1\in \overline{K_2}$ and $k_2\in \overline{K_2}$ be its two boundary points. On the great circle $C$, moving monotonically $k_1$ and $k_2$ inside $I_1$ and $I_2$ respectively, we find, by continuity, two points in $I_1$ and $I_2$ on $C\cap (I_1\cup I_2)$ whose distance is equal to $\pi$.  This proves the proposition.
 \end{proof}

Let ${\mathcal P}_1$ and  ${\mathcal P}_2$ be the sets of supporting hyperplanes to $K_1$ and $K_2$ respectively, and let ${\mathbb P}_1$  and  ${\mathbb P}_2$  be respectively  the collections of great hyperspheres that do not intersect the open convex sets $I_1$ and $I_2$.  We have ${\mathbb P}_i \supset {\mathcal P}_i$. Then, we have, for $i=1,2$,
\[
I_i = \cap_{\pi \in {\mathcal P}_i} H^+_\pi = \cap_{\pi \in {\mathbb P}_i} H^+_\pi. 
\]

   We let $\Omega$ be the union of the open segments $]a_1,a_2[\in S^n$ such that $a_1\in K_1$, $a_2\in K_2$ and with $]a_1,a_2[\cap (K_1\cup K_2)=\emptyset$.
   \begin{proposition} We have 
   \[\Omega=S^n\setminus (\overline{K_1}\cup \overline{K_2}).\]
 \end{proposition}
 \begin{proof} The inclusion $\Omega\subset S^n\setminus (\overline{K_1}\cup \overline{K_2})$ is clear from the definition of $\Omega$. Let $P$ and $N$ be two antipodal points in $S^n$ contained respectively in $I_1$ and $I_2$ (Proposition \ref{prop:anti}). Given a point $p\in S^n\setminus (\overline{K_1}\cup \overline{K_2})$, consider a great circle $C$ through $N$ and $S$ containing $p$. This circle intersects $\Omega$ in two open segments, one of which contains  $p$. Let $]a_1,a_2[$ be this segment. We may assume without loss of generality that $a_i\in K_i$ for $i=1,2$.  This shows that $p$ is in $\Omega$.
 \end{proof}

 We now define a partial order relation on $\Omega$.
 
 \begin{definition}[Partial order] For $p$ and $q$ in $\Omega$, we write $p<q$, and we say that \emph{$q$ is in the future of $p$}, or that \emph{$p$ is in the past of $q$}, if there exists a segment $[p,q]$ of a great circle $C$  such that $[p,q]$ joins $p$ and $q$ and such that there exist two points $a_1\in C\cap K_1$ and $a_2\in C\cap K_2$ with the four points $a_1,p,q,a_2$ lying in that order on $C$ with $]a_1,a_2[\subset \Omega$, and $]a_1,a_2[$ not contained in any supporting hyperplane to $K_1$ or $K_2$.
 \end{definition}

 As usual, we write $p\leq q$ if $p<q$ or $p=q$.

 For any point $p$ in $\Omega$, we set  $\mathbb{P}_2(p)$ to be the set of great hyperspheres in $S^n$ separating $p$ from $I_2$.
 
\begin{definition}[Future and past]  Given a point $p$ in $\Omega$, we call the \emph{future} of $p$ the set of points $q$ in $\Omega$ such that $p<q$, and we denote this set by $\mathfrak{I}_2^+(p)$, and the
  \emph{past} of $p$ the set of points $q$ in $\Omega$ such that $q<p$, and we denote this set by $\mathfrak{I}_2^-(p)$.
  \end{definition}

  The set ${\mathfrak I}_2^+(p)$   is  an open subset of $\mathbb{R}^n$. It is also  characterized by the following: 
\begin{equation}\label{char:Ib}
{\mathfrak I}_2^+(p) = \mathrm{Int}\left(\{(\cap_{\pi \in \mathbb{P}_2^c(p)} \overline{^2H^+_\pi}) \setminus \overline{K_2} \} \cap   \{\cap_{\pi \in\mathbb{P}_1(p)} \overline{^1H^-_\pi} \} \right).
\end{equation}

%\begin{equation}\label{char:4}
%{\mathfrak I}^-(p) =  \cap_{\pi \in {\mathbb P}_2(p)} H^+_\pi
%\end{equation}
% and
%\begin{equation}\label{char:5}
%{\mathfrak I}^-(p) = \mathrm{Int}\left(\cap_{\pi \in {\mathbb P}_2(p)} H^-_\pi \right)
%\end{equation}
  
\begin{proposition} \label{prop:eq-sphere}
We have the equivalences: 
\[
p<q \iff \mathbb{P}_2^c(p) \cup \mathbb{P}_1(p)  \subsetneq \mathbb{P}_2^c(q) \cup\mathbb{P}_1(q) \iff  \mathbb{P}_2(p) \cup \mathbb{P}_1^c(p)  \supsetneq \mathbb{P}_2(q) \cup \mathbb{P}_1^c(q).
\]

\end{proposition}

We introduce the set $\mathbb{P}_{12}(p)=  \mathbb{P}_2(p) \cup \mathbb{P}_1^c(p) $. 
The second equivalence in the above proposition becomes
\[
p<q \iff \mathbb{P}_{12}(p) \supsetneq \mathbb{P}_{12}(q).
\]

We let $\mathcal{P}_2(p)$ be the set of supporting hyperplanes to $K_2$ defined as
 \[
\mathcal{P}_2(p) =  \mathbb{P}_{12}(p) \cap \mathcal{P}_2
 \]

The set $\mathcal{P}_2(p)$ is also the set of supporting hyperplanes to $K_2$ at the points of $K_1^2(p)$, the future set of $p$ in $K_1^2$.

 We have the following:
  \begin{proposition}[Transitivity of the partial order relation]  Let $p$, $q$ and $r$ be three points in $\Omega$ satisfying $p\leq q$ and $q\leq r$. Then we have $p\leq r$.
 \end{proposition}
 \begin{proof} 
 The proof follows the same outline as Proposition \ref{p:inclusion112}.
\end{proof}

For each $p\in \Omega$, we let $\mathcal{P}_2(p)$ denote the union of the supporting hyperplanes at $K_2$ that separate $p$ from $I_2$.

The following proposition is now also proved using the methods introduced previously.
\begin{proposition}\label{prop:p2}
For any two points $p$ and $q$ in $\Omega$, we have:
\[p<q\iff \mathcal{P}_2(p)\supset \mathcal{P}_2(q).\]
\end{proposition}

  We now define the \emph{timelike spherical relative Funk distance} $F_1^2$. Its domain of definition is the subset $\Omega_{\leq}$ of the product $\Omega\times \Omega$  consisting of pairs $(p,q)$ with $p\leq q$. We are using the notation that we used in \S \ref{s:ETRF} in the context of the timelike Euclidean relative Funk geometry,  assuming that this will not cause any confusion, since the present section and \S \ref{s:ETRF} are independent.

\begin{definition}[The timelike spherical Hilbert geometry] We first define  $F_1^2$ on the subset $\Omega_<$ of $\Omega\times\Omega$ consisting of pairs $(p,q)$ with $p<q$ by the formula 
 \[F_1^2(p,q)=\log\frac{\sin d(p,b(p,q))}{\sin d(q,b(p,q))},\]
 and we then extend this definition to the pairs $(p,p)$ in the diagonal of $\Omega\times\Omega$ by setting $F_1^2(p,p)=0$ for any such pair. Here $d$ is the usual spherical distance function. 
 \end{definition}

\begin{proposition} The timelike spherical relative Funk distance is also given by:
  \[F_1^2(p,q)=\inf_{\pi\in \mathcal{P}_2(q)}\log\frac{\sin d(p,\pi)}{\sin d(q,\pi)}.\]
  \end{proposition}
  \begin{proposition}[Time inequality] The function $F_1^2(p,q)$ satisfies the timelike inequality:
  \[F_1^2(p,q)+F_1^2(q,r)\leq F_1^2(p,r) \]
  for any $p,q,r$ in $\Omega$ such that $p<q<r$. 
  \end{proposition}
  \begin{proof}  
Since  ${\mathcal P}(q) \supset {\mathcal P}(r)$ (Proposition \ref{prop:p2}), we have 
 \begin{eqnarray*}
F_1^2(p,q)+F_1^2(q,r)&=& 
\inf_{\pi \in {\mathcal{P}_2(q)}} \log \frac{\sin d(p, \pi)}{ \sin d(q, \pi)} +
\inf_{\pi \in {\mathcal{P}_2(r)}} \log \frac{ \sin d(q, \pi)}{ \sin d(r, \pi)}\\
&\leq & 
  \inf_{\pi \in {\mathcal{P}_2(r)}} \log \frac{ \sin d(p, \pi)}{ \sin d(q, \pi)} +
\inf_{\pi \in {\mathcal{P}_2(r)}} \log \frac{ \sin d(q, \pi)}{ \sin d(r, \pi)}\\
&\leq &
\inf_{\pi \in {\mathcal{P}_2(r)}}\left( \log \frac{ \sin d(p, \pi)}{ \sin d(q, \pi)}
+  \log \frac{ \sin d(q, \pi)}{ \sin d(r, \pi)}
\right) \\
&=&
  \inf_{\pi \in {\mathcal{P}_2(r)}} \log \frac{ \sin d(p, \pi)}{ \sin d(r, \pi)}\\
  &=& F_1^2(p,r).
\end{eqnarray*}
\end{proof}

The following proposition concerning the geodesics of a timelike spherical relative Funk geometry is an analogue of Proposition \ref{prop:geo2} 
 that concerns the timelike Euclidean relative Funk geometries, and it is proved in the same way. It will be useful in the next section, which concerns the Finsler structure of such a geometry. 
 
\begin{proposition}[Geodesics] \label{prop:geo27} A timelike spherical relative Funk geometry $F_1^2$ defined on a set $\Omega_\leq $ associated with two disjoint convex hypersurfaces $K_1$ and $K_2$ in $S^n$ satisfies the following:
\begin{enumerate}
\item \label{eucl} The spherical segments  in $\Omega$ that are of the form $[p,q]$ where $p<q$ are $F_1^2$-geodesics. 
\item The spherical segments in (1) are the unique $F_1^2$-geodesic segments
if and only if there is no nonempty open spherical segment contained in the subset $K_1^2$ of points in $K_2$ facing $K_1$.
\end{enumerate}
\end{proposition}

\section{The  Finsler structure of the timelike spherical relative Funk geometry}\label{s:FSTSRF}

For every point $p$ in $\Omega \subset S^n$, we associate a  \emph{timelike Minkowski functional} $f_{F_{2}^{1}}(p,v)$ defined on the subset of the tangent space $T_p\Omega$ of $\Omega$ at $p$ consisting of the zero-vector union the non-zero vectors $v$ satisfying 
\begin{equation}\label{eq:nonzero}
 \exp_p tv \in \mathfrak{I}_1^+(p) \mbox{ for some } t > 0,
\end{equation}
where $\exp_p: T_p S^n \rightarrow S^n$ is the exponential map based at $p$. We denote by  $C_1^+(p) \subset T_p \Omega$ the union of  vectors $v$ that satisfy the property (\ref{eq:nonzero}) or are the zero vector.  

From the definition of the partial order relation $p<q$ on $\Omega$, the fact that $q$ lies in the future of $p$ and $p$ lies in the past of $q$ are equivalent: 
 \begin{equation} \label{future-past-symmetry-sphere} 
q \in \mathfrak{I}_2^+(p) \Leftrightarrow p \in \mathfrak{I}_1^+(q).
\end{equation}
Thus, there is a symmetry between $C_1(p)$ and $C_2(p)$ in the sense that
 \[
 v \in C_1(p) \Leftrightarrow -v \in C_2(p).
 \]
 
\begin{definition}[Timelike Minkowski functional] We define the function $f_{F_{1}^{2}}(p,v)$ for  $p \in \Omega$ and  for any nonzero vector $v \in C_1^+(p)$  by the following formula:
\begin{equation}\label{eq:f21sphere}
f_{F_{1}^{2}}(p, v) = \inf_{\pi \in {\mathcal P}_2(p)} \frac{\langle v, \eta_\pi \rangle }{\tan d(p, \pi)} 
\end{equation}
where $\langle\cdot, \cdot \rangle$ is the canonical Riemannian metric on $S^n$, and $\eta_\pi$ the unit tangent vector at $p$ perpendicular to $\pi$ (with respect to the underlying Euclidean metric) and pointing toward $\pi$. We extend the definition by setting $f_{F_{1}^{2}}(p,0)=0$ when $v =0$.
\end{definition}

Note that due to the condition  imposed in Definition  \ref{rel-pos} on the relative position of $K_1$ and $K_2$, we have $d(p, \pi) < \pi$ for each $\pi \in {\mathcal P}_2(p)$, which in turn makes the function $f_{F_{1}^{2}}$ well-defined.

There is a timelike Minkowski functional $f_{F_{2}^{1}}$ defined on $C_1(p)$ for the timelike spherical relative Funk metric $F_2^1$, obtained simply by interchanging the indices $1$ and $2$ of $f_{F_{1}^{2}}$. 

\begin{definition}[Timelike reverse Minkowski functional]  We define the timelike Minkowski functional $f_{\overline{F_{2}^{1}}}(p,v)$ for the timelike Euclidean relative reverse Funk geometry $\overline{F_{1}^{2}}$ by
\[
f_{\overline{F_{2}^{1}}}(p,v) = f_{F_{2}^{1}}(p, -v),
\]
for $v \in C_2^+(p) = - C_1^+(p) $.
\end{definition}

Thus the two  timelike Minkowski functionals $f_{F_{2}^{1}}$ and $f_{\overline{F_{2}^{1}}}$ share the same domain of definition in $T_p S^n$.
It is easy to check the following: 

\begin{proposition}
The functionals $f_{F_{1}^{2}}(p, v)$ and $f_{\overline{F_{2}^{1}}}$ defined on the open cone $C_2^+(p)$ in $T_p \Omega$ satisfy all the properties required by a timelike Minkowski functional (Definition \ref{def:Min}).  
\end{proposition}

Repeating the arguments in \S \ref{s:Funk-Finsler}, we set up a timelike space using the Finsler structure. We say that a piecewise  $C^1$ curve  $\sigma: [0,1] \to  \Omega \subset S^n$, $t\mapsto \sigma(t) $, is {\it timelike} if at each 
time $t\in [0,1]$ the tangent vector $\sigma'(t)$ is an element of the cone $C_2^+(\sigma(t)) \subset T_{\sigma(t)} \Omega$.

\begin{definition}[The partial order relation]
Suppose that $p$ and $q$ are two points in $\Omega$. We write $p \prec q$, and we say that \emph{$q$ is in the $\prec$-future of $p$}, if there exists a timelike piecewise $C^1$ curve  $\sigma: [0,1] \to \Omega$ joining $p$ to $q$.
\end{definition}
 
By following the outline of the corresponding results proved in \S \ref{s:Funk-Finsler}, the following holds in the present setting.  (The proof is the same as that of Proposition \ref{order-equiv} except that $\mathfrak{I}^+(p)$ needs to be replaced by $\mathfrak{I}_2^+(p)$.)

\begin{proposition}
The two order relations $<$ and $\prec$ coincide; namely, for any two points $p$ and $q$ in $\Omega$, we have 
\[
p < q \Leftrightarrow p \prec q.
\]
\end{proposition} 

As we did in \S \ref{s:Timelike-Finsler}, we denote by $\delta_1^2$ the timelike intrinsic distance function associated with this timelike Finsler structure:
\begin{equation}
\delta_1^2(p, q) = \sup_{\sigma} {\rm Length}(\sigma)
\end{equation}
where the supremum is taken over all the timelike piecewise $C^1$ curves $\sigma:[0,1]\to \Omega$ satisfying $\sigma(0)=p$ and $\sigma(1)=q$.  Again, following the general set up of \S \ref{s:Timelike-Finsler}, we show, as in Lemma \ref{lemma:finite}, that the intrinsic distance $\delta(p, q)$ for $p<q$ is finite. Finally, we obtain that the domain of definition $\Omega_<$ defined with the partial order $<$ for  the timelike Funk distance $F_1^2$  and the domain of definition $\Omega_\prec$ for the  timelike distance function $\delta_1^2$  coincide. Furthermore, we shall prove the equality $\delta_1^2(p, q) =F_1^2 (p, q)$ for any pair $p < q$ in $\Omega_<$. We state this as follows:

\begin{theorem}\label{th:distance1}
For a pair $(p,q)\in \Omega_{\leq}$, we have
\[
F_1^2(p, q) = \delta_1^2(p, q).
\]
\end{theorem}

In different words, we have the following useful form of Theorem \ref{th:distance1}:
\begin{theorem}\label{Funk-Finsler-3} 
The  timelike spherical relative Funk geometry $F_1^2$ is a timelike Finsler structure with associated 
Minkowski functional $f_{F_{1}^{2}}(p,v)$.
\end{theorem}

With the identification $F_1^2=\delta_1^2$, given a pair of points $p, q$ with $p < q$, there always exists a $\delta$-distance-realizing (length-maximizing) geodesic from $p$ to $q$, since the spherical geodesic   $[p,q]$ is an $F_1^2$-geodesic.

\begin{proof}[Proof of Theorem \ref{th:distance1}] The proof is similar to the proof of Theorem \ref{th:distance}: 
Given a pair $(p, q)$ with $p<q$, consider  the spherical geodesic ray $R(p, q)$ from $p$ through $q$ and let $b(p, q) \in K_2$ be the first intersection point of this ray with the convex set  $\overline{K_2}$. Parameterize the geodesic segment $[p,q]$  by a path $\sigma(t)$ having parameter $t$ proportionally to arc-length, with $\sigma(0)=p, \sigma(1)=q.$  Then we have 
\[
\int_0^1 f_{F_1^2}(\sigma(t), \sigma'(t)) \, dt = \log \frac{\sin d(p, b(p,q))}{\sin d(q, b(p,q))} = F_1^2(p, q),
\] 
since 
\[
\frac{d}{dt} \log \frac{\sin d(p, b(p,q))}{\sin d(\sigma(t), b(p,q))} = f_{F_1^2}(\sigma(t), \sigma'(t)).
\]
Taking the supremum over the set of piecewise-$C^1$ timelike paths from $p$ to $q$, we obtain the inequality
\[
\delta_1^2(p, q) \geq  F_1^2(p, q).
\]

We now need a monotonicity lemma for the intrinsic distances.  

We consider a pair of  open convex set $I_1$ and $I_2$ bounded respectively by the two disjoint convex hypersurfaces $K_1$ and $K_2$  which we assume as before to be in good position (Definition \ref{rel-pos}). Let $f_{F_{1}^{2}}: T\Omega \rightarrow {\mathbb R}$ be the associated timelike Minkowski functional, and $\delta_1^2$ the intrinsic distance  induced by $f_{F_{1}^{2}}$.

Finally, let 
$\widehat{I}_2 \supset I_2$ be another open convex set and $\widehat{K}_2$ its bounding hypersurface. We assume that $K_1$ and $ \widehat{K}_2$ are also in good position. Let $f_{\widehat{F}_{1}^{2}}$  be the timelike Minkowski functional of the timelike relative Funk distance $\widehat{F}_{1}^{2}$ associated with the pair $(I_1,\widehat{I}_2)$ and $\widehat{\delta}_1^2$ the associated intrinsic distance. (Note here that the domains of definition of   $f_{\widehat{F}_{1}^{2}}$ and $\widehat{\delta}_1^2$ contain those of $f_{F_{1}^{2}}$ and $\delta_1^2$ respectively.)  

\begin{lemma}\label{lem:another3} Suppose that $(p,q)$ is in the domain of definition of both timelike intrinsic distances $\delta_1^2$ and $\widehat{\delta}_1^2$. Then we have
\[
\widehat{\delta}_1^2(p, q) \geq \delta_1^2(p, q).
\]
\end{lemma}

The proof is, with an adaptation of the notation, the same as that of Lemma \ref{lem:another1}.

\noindent {\it Proof of Theorem  \ref{th:distance1} continued}.--- For $(p, q) \in \Omega_<$,  let 
\[\widehat{I}_2 = H^+_{\pi_{b(p, q)}},\]
 where $H^+_{\pi_{b(p, q)}}$  is 
the open hemisphere bounded by a hyperplane $\pi_{b(p, q)}$ supporting $K_2$ at $b(p, q)$ and containing $\widetilde{I}_2$. The open set $\widehat{I}$ is equipped with its intrinsic distance $\widehat{\delta}$. We now apply Lemma \ref{lem:another3} to this setting, where a convex set  $\widehat{I}_2$  
contains $I_2$, and obtain $\widehat{\delta}_1^2 \geq \delta_1^2$.

For the open hemisphere $\widehat{I}_2 = H^+_{\pi_{b(p, q)}}$, the values of $F_1^2(p, q)$, $\widehat{F}_1^2(p, q)$ and $\widehat{\delta}_1^2(p, q)$ all coincide. Indeed the set $
\widehat{\mathcal{P}}_2$ of supporting hyperplanes consists of the single element $\pi_{b(p, q)}$, and the path
$\sigma$ from $p$ to $q$ is length-maximizing, since every timelike path for $\widehat{I}_2$ is $\widehat{F}_1^2$-geodesic. This follows from the considerations in \S \ref{s:ETRF}.
 
By combining the above observations, we have 
\[
 F_1^2(p, q) = \widehat{F}_1^2(p, q) = \widehat{\delta}_1^2(p, q) \geq \delta_1^2(p, q) \geq F_1^2(p, q)
\]
and  the equality $\delta_1^2(p, q) =F_1^2(p, q)$ follows.
\end{proof}

\section{The  timelike spherical Hilbert geometry}
 
 In this section, we continue using the notions and notation of \S \ref{s:TSRF}: $I_1,I_2$ is an ordered pair of convex subsets of the sphere $S^n$ whose boundaries are convex hypersurfaces in $S^n$ denoted by $K_1$ and $K_2$ respectively, satisfying the conditions stated at the beginning of that section and with $\overline{K_i}= I_i\cup K_i$ for $i=1,2$. The subset $\Omega$ of $S^n$ is defined as in \S \ref{s:TSRF}, and the partial order relation $p<q$ for $p$ and $q$  in $\Omega$ is defined accordingly, $K_1$ representing the past and $K_2$ the future. 
  
  We denote, as usual, the set of points $(p, q)$ in $\Omega \times \Omega$ satisfying $p < q$ by $\Omega_<$. 
 We also write $p \leq q$ when $p<q$ or $p=q$.  
 
 $F_1^2$ is the timelike spherical Funk metric associated with the ordered pair $K_1,K_2$. We showed that this is a timelike Finsler metric, and its associated timelike Minkowski functional, denoted by $f_{F_{1}^{2}}$ is defined for each point $p$ in $\Omega$ as in \S \ref{s:FSTSRF} on a subset of the tangent space $T_p\Omega$ of $\Omega$ at $p$ which is a cone denoted by $C_2^+(p)$.

 As in the Euclidean case (see Definition \ref{def:123}), there is a timelike  spherical relative reverse Funk metric $\overline{F_1^2}$ associated with the pair $(K_1,K_2)$. For this, we first consider the timelike spherical relative Funk metric $F_2^1$ associated with the ordered pair $(K_2,K_1)$, and we define the new function $\overline{F_1^2}$, whose domain of definition  is equal to the domain of definition of $F_1^2$, by
 \[\overline{F_1^2}(p,q)=F_2^1(q,p) .\]

 \begin{definition}[Timelike spherical Hilbert metric]
 The timelike spherical Hilbert metric $H_1^2$ associated with the ordered pair $K_1,K_2$ is defined on the set of ordered pairs $(p,q)$ such that $p<q$ in the setting where the convex set $K_1$ represents the past and the convex set $K_2$ the future, by the formula 
 \[ H(p,q)=\frac{1}{2}(F_1^2(p,q)+\overline{F_1^2}(p,q)).\]

As usual, the definition is extended to the case where $p=q$ by setting $H(p,q)=0$.
 \end{definition}

Unlike the situation studied in \cite{PY2}, there is no straightforward way of defining a timelike Funk spherical metric,. One reason is that given two distinct points in the complement of a convex subset of the sphere $S^n$, there is no natural way of saying that one is in the future of the other (the great circle through these  points may intersect the convex set in two points).

 We recall that given four points $p_1, p_2, p_3, p_4$ situated in that order on a great circle on the sphere, their spherical cross ratio is defined by
\[
[p_1, p_2, p_3, p_4] = \frac{\sin d(p_2, p_4) \sin d(p_3, p_1)}{\sin d(p_3, p_4) \sin d(p_2, p_1)}.
\] 
Its values are in ${\mathbb R}_{\geq 0} \cup \{\infty\}$. The spherical cross ratio is a projectivity  invariant, cf. \cite{PY2}. 

For any pair of points $(p, q)$ in $\Omega_<$, let $a_1 \in K_1$ and $a_2 \in K_2$ be the intersection points between the great circle 
through $p$ and $q$ and the two hypersurfaces $K_1$ and $K_2$, so that $a_1, p, q, a_2$ lie on the arc of great circle $[a_1, a_2] \subset \Omega$ in that order.   With this notation, the timelike spherical Hilbert distance associated with the pair $(K_1,K_2)$  is also given by the following equivalent form:
\begin{proposition}\label{prop:SH2} Let $p$ and $q$ be two points in $\Omega$ satisfying $p<q$ and let $[a_1,a_2]$ be the segment of great circle containing $p$ and $q$ with $[a_1,a_2]\cap K_i=a_i$  for $i=1,2$. Then, we have:
\[
H(p, q) =\frac{1}{2} \log [a_1, p, q, a_2].
\]
\end{proposition}

\begin{proposition}[Invariance] \label{prop:SHinv}  
The timelike spherical Hilbert geometry associated with the pair of convex sets $K_1,K_2\subset S^n$ is invariant by the projective transformations of the sphere $S^n$ that preserve setwise each of the two convex sets $K_1,K_2$. \end{proposition}

The timelike spherical Hilbert geometry $H$ has an underlying timelike Finsler structure which we describe in the next section. For that, we need first to talk about $H$-geodesics. 
The following proposition is analogous to Proposition  \ref{Hibert-geo} concerning the  timelike Hilbert geometry.

\begin{proposition}\label{Spherical-Hibert-geo}
(a) In a timelike spherical Hilbert geometry $H$ associated with an ordered pair of convex hypersurfaces $K_1,K_2$, the spherical segments of the form $]a_1,a_2[$, equipped with their natural orientation from $a_1$ to $a_2$ and satisfying the following three properties
\begin{enumerate}
\item \label{pr1} $a_1 \in K_1$ and $a_2 \in K_2$; 
\item $]a_1,a_2[$  is not contained in any supporting hyperplane to $K_1$ or to $K_2$;
\item  \label{pr3} the open spherical segment $]a_1,a_2[$ is in the complement of $K_1\cup K_2$ 
\end{enumerate}
 are $H$-geodesics. Each such geodesic (with its orientation) is isometric to the real line.
 
 (b) The oriented spherical segments contained in the segments  of the form $[a_1,a_2]$  satisfying the properties (\ref{pr1}-\ref{pr3}) above are the unique $H$-geodesics
if and only if the following holds: There are no spherical segments $[a_1,a_2]$ of the above form with $a_1$ in the interior of an open nonempty spherical segment $J_1\subset K_1$ and $a_2$ in the interior of an open nonempty segment $J_2\subset K_2$, with $J_1$ and $J_2$ coplanar (contained in a 2-dimensional sphere).
\end{proposition}
The proof is an adaptation of that of the non-timelike spherical Hilbert metric (Proposition 8.2 of \cite{2012-Hilbert2}), and we omit it.
 
We end this section by a remark concerning the hyperbolic analogues of our timelike spherical Hilbert geometry.

\begin{remark}[Timelike hyperbolic Funk geometry and timelike hyperbolic Hilbert geometry]
Let us first recall that there is a Funk geometry associated with a convex hypersurface $K$ in the hyperbolic space $\mathbb{H}^n$. This was studied by the authors in \cite{PY2}. In the same way, one can define a timelike Funk geometry associated with convex subsets of $\mathbb{H}^n$. 
The pre-order $p < q$ is defined as in the case of the timelike Euclidean Funk geometry, and the timelike distance from $p$ to $q$, where $p$ and $q$ satisfy $p<q$, is given by the formula
\begin{equation}\label{eq:hyp.FF}
F(p, q) = \log \frac{\sinh d(p, b(p, q))}{\sinh d(q, b(p, q))} 
\end{equation}
where $b(p, q)$ is the point where the ray $R(p, q)$ hits $K$ for the first time, and $d$ is hyperbolic distance.  
 Several properties of the hyperbolic (non-timelike) Funk metric proved in \cite{PY2} hold \emph{verbatim} for this timelike hyperbolic Funk geometry. 
In particular, we have a variational formulation of the timelike hyperbolic Funk distance:
\[
F(p, q) = \inf_{\pi \in {\mathcal P}(p)} \log \frac{\sinh d(p, \pi)}{\sinh d(q, \pi)}.\]

There is also a timelike hyperbolic Hilbert geometry, defined in a way analogous   to the timelike Hilbert geometry defined in \S \ref{def:H}, replacing, in the definition, the distance by the hyperbolic sine of the distance, as we did in the definition of the timelike hyperbolic metric in (\ref{eq:hyp.FF}).

The hyperbolic segments are geodesics for the timelike hyperbolic Funk and for the timelike hyperbolic Hilbert geometries.
\end{remark}

\section{The timelike Finsler structure of the timelike spherical Hilbert geometry} 
We shall define a function $f_H(p,v)$ which will play the role of a timelike Minkowski functional associated with the timelike spherical Hilbert geometry $H$. It is defined on pairs $(p,v)$ belonging to the tangent bundle of $\Omega$, where $p\in \Omega$ and $v$ is a vector in the tangent space $T_p\Omega$ which is either the zero vector or a vector tangent to a segment of great circle starting at $p$ and pointing in the direction of a point in $\mathfrak{I}^+(p)$. This function $f_H(p,v)$ is defined by the same formula as the timelike Minkowski norm associated with the Finsler structure of the Euclidean Hilbert geometry (see formulae (\ref{eq:Hilbert2}) and (\ref{eq:Hilbert1})):
\begin{equation} \label{eq:Hilbert21}
f_H(p, v) = f_{F_{1}^{2}}(p,v) + f_{F_{2}^{1}}(p,-v),\end{equation}
or, equivalently,
\begin{equation} \label{eq:Hilbert11}
f_H(p, v) = f_{F_{1}^{2}}(p,v) + f_{\overline{F_{1}^{2}}}(p,v)\end{equation}
where $f_{F_{1}^{2}}$ and $f_{\overline{F_{1}^{2}}}$ are now the timelike Minkowski norms on the tangent spaces associated with the timelike spherical relative Funk geometry and the  timelike reverse Funk geometry associated, as in \S \ref{s:FSTSRF}. The convex hypersurfaces $K_1$ and $K_2$ with the given order are implied by the notation. 
 
Repeating the argument in \S \ref{s:Timelike-Finsler}, we set up a timelike distance function using the Finsler structure $f_H$. We say that a piecewise  $C^1$ curve  $\sigma: [0,1]\to  \mathbb{R}^n$, $t\mapsto \sigma(t)$ is {\it timelike} if at each 
time $t\in J$ the tangent vector $\sigma'(t)$ is an element of the cone $C_2^+(\sigma(t)) \subset T_{\sigma(t)} \mathbb{R}^n$.

\begin{definition}[The partial order relation]
Suppose that $p$ and $q$ are two points in $\mathbb{R}^n$. We write $p \prec q$, and we say that \emph{$q$ is in the $\prec$-future of $p$}, if there exists a timelike piecewise $C^1$ curve  $\sigma: J \to \Omega$ joining $p$ to $q$.
\end{definition}
 
By following the outline in \S \ref{s:Timelike-Finsler}, we have the following in the present setting.  The proof is the same as that of Proposition \ref{order-equiv} except that $\mathfrak{I}^+(p)$ needs to be replaced by $\mathfrak{I}_2^+(p)$. 

\begin{proposition}
The two order relations $<$ and $\prec$ coincide; namely, for any two points $p$ and $q$ in $\Omega$, we have 
\[
p < q \Leftrightarrow p \prec q.
\]
\end{proposition} 

In analogy with the previous settings considered, we now denote by $\delta$  the timelike intrinsic distance function associated with this timelike Finsler structure:
\begin{equation}
\delta(p, q) = \sup_{\sigma} {\rm Length}(\sigma)
\end{equation}
where the supremum is taken over all the timelike piecewise $C^1$ curves $\sigma:[0,1]\to \Omega$ satisfying $\sigma(0)=p$ and $\sigma(1)=q$.  Also, by the work done in \S \ref{s:Timelike-Finsler} (see Lemme \ref{lemma:finite}), the intrinsic distance $\delta(p, q)$ for $p<q$ is finite. The domain of definition $\Omega_<$ defined with the partial order $<$ for  the timelike Hilbert distance $H$  and the domain of definition $\Omega_\prec$ for the  timelike distance function $\delta$  coincide. 

 The following theorem is then proved in the same way as Theorem \ref{Hilbert-Finsler}, replacing, in the proof, the Euclidean segments joining a pair $p,q$ by the spherical geodesic joining them:

\begin{theorem}\label{Hilbert-FinslerA} 
The timelike spherical Hilbert geometry has an underlying timelike Finsler structure given by the 
Minkowski functional $f_H$ defined in (\ref{eq:Hilbert11}).
\end{theorem}

\section{Timelike spherical Hilbert geometry with antipodal symmetry: Light cone and null vectors}\label{s:anti}

We consider a notable case of a timelike spherical Hilbert metric, namely, the case where the underlying two convex hypersurfaces $K_1$ and $K_2$ are antipodal in $S^n$, that is, they satisfy $K_2 = - K_1$ where the 
minus sign refers to the antipodal map $x \mapsto -x$ of $S^n$ 
modeled in ${\mathbb R}^{n+1}$.  Note that the antipodality condition guarantees that $K_1$ and $K_2$ are in good position (Definition \ref{rel-pos}) on $S^n$.  

In this geometry, the quotient space by the antipodal symmetry group $\mathbb{Z}_2$ is identified with a timelike Hilbert geometry on an open subset of   the projective space $
{\mathbb R}{\rm P}^n$, in which $K_1$ and $K_2$ become 
a single convex hypersurface $K$ under the quotient map $S^n \rightarrow {\mathbb R}{\rm P}^n$.  This has been investigated by Busemann \cite{B-Timelike}. We do not, however,  consider the projective space here, and exclusively  treat  the spherical setting with two convex sets $\overline{K_1}$ and $\overline{K_2}$.  (Working in the projective space, Busemann gets locally timelike spaces instead of timelike spaces.)
%
%\begin{figure}[!ht] 
%\centering
% \psfrag{p}{\small $p$}
%  \psfrag{F}{\small The future of $p$} 
% \psfrag{P}{\small The past of $p$} 
%\includegraphics[width=0.55\linewidth]{future-past.eps}    \caption{\small {}}   \label{future-past}  
%\end{figure}

In this setting, there is a doubling phenomenon for the rays emitted from a point $p \in S^n$ in 
the complement $\Omega$ of $\overline{K_1} \cup \overline{K_2}$ where $\overline{K_i} = K_i \cup I_i$: if such a ray intersects $K_2$ at a point $K_2^+(p)$ in the future, then it also  does so at a point $K_1^-(p)$ in the past.

   Let us recall that in the physics modeled by Minkowski geometry, the fact that a curve in the light cone has zero length corresponds to the fact that light travels along it at infinite speed. So far, we have carefully avoided the issue of null vectors in  timelike geometry.  (This is our condition that in the definition of relation $p<q$ we do not allow the pair $p,q$ to be on a supporting line of the convex set.) We did so because there is no obvious coherent general treatment of such vectors in the timelike Funk and Hilbert geometries. However, this setting,  where $K_1$ and $K_2$ are antipodally located on $S^n$,  is a particular situation worth being investigated in which null vectors arise.  The details are as follows.
   
   A great circle intersecting $K_1$ at two points $a_1, b_1$ also intersects $K_2$ at two antipodal points $a_2(=-b_1)_1, b_2(=-a_1)$.    Now consider the situation where a great circle $C$ is a supporting line of $K_2$ and let $a_2$ be a point in $\pi \cap K_2$.  Then, this circle $C$ is also a supporting line at $a_1$, which is identified with $-a_2$.  We consider a pair of points $p, q$ on an arc of the great circle $C$ in $\Omega$, and the timelike Hilbert distance $H(p, q)$, which is the logarithm of the spherical cross ratio of the  quadruple $(a_1, p, q, a_2)$ lying on the arc in that order.  
\[
H(x, y) = \frac12 \log \frac{\sin d(p, a_2) \sin d(q, a_1) }{\sin d(q, a_2) \sin d(p, a_1)}$$
$$=  \frac12 \log \frac{\sin d(p, a_2) \sin (\pi - d(q, a_2) ) }{\sin d(q, a_2) \sin (\pi- d(p, a_2))} = 0.
\]
Here we have used the fact that $d(p, a_1) = \pi - d(p, a_2)$, as $a_1$ is antipodal to $a_2$.
As the choices of $p$ and $q$ on the great circle $C$ are arbitrary, we conclude that the (naturally extended) timelike Minkowski functional evaluated along the supporting great circle to $K_1$ and $K_2 = - K_1$ is zero. 

 In other words, given a point $p$ in $\Omega$, consider the cone $\rm{Cone}_2(p)$  consisting of great circles through $p$ each of which is   a supporting line of $K_2$.  These great circles are automatically elements of $\rm{Cone}_1(p)$. Recall that the set of vectors  in $T_p \Omega$ on which the Minkowski functional $P_H(p)$ is defined is equal to $C_2(p)$.  Then the tangent vectors  in $T_p \Omega$ which lie in the boundary of the open cone $C_2(p)$ constitute the future-directed light cone at $p$ with respect to the timelike Minkowski functional for the  timelike spherical Hilbert geometry $H$.  
In this way, we see that null vectors in the timelike spherical Hilbert geometry with antipodal symmetry naturally exist.

  \section{The de Sitter geometry as a timelike spherical Hilbert geometry with antipodal symmety}

In this last section we explain that the de Sitter space is a special case of the timelike spherical Hilbert geometry with antipodal symmetry. 
In the setting described in the preceding section, if we take $K_1$ to be a small circle of radius $\pi/4$ in $S^{n} 
\subset {\mathbb R}^{n+1}$, then the resulting timelike Hilbert geometry is isometric to the de Sitter metric 
restricted to the timelike vectors.   We now establish this isometry.

We first recall that the $n$-dimensional de Sitter space is the unit sphere in the Minkowski space-time ${\mathbb R}^{n, 1}$, namely,
\[
{\rm dS}^{n-1, 1} = \{ (x_0, x_1, \dots, x_n) \,\, | \,\, -x_0^2 + \sum_{i=1}^n x_i^2= 1  \} \subset {\mathbb R}^{n, 1},
\]
equipped with the so-called de Sitter metric,  a Lorentzian metric of type $(n, 1)$ whose first fundamental form is induced from the ambient Minkowski metric $ds^2 = -dx_0^2 + \sum_i^n dx_i^2$. It is diffeomorphic to $S^{n-1} \times {\mathbb R}$. The de Sitter space ${\rm dS}^{n-1, 1} $ is then an $n$-dimensional Lorentzian manifold, with global time orientation where we take the future direction to be the globally defined non-vanishing vector field $\frac{\partial}{\partial x_0}$. Naturally this induces an order relation in the sense that $q$ lies in the future of $p$ when there exists a piecewise $C^1$ timelike curve from $p$ to $q$.

\begin{figure}[!ht] 
\centering
 \psfrag{1}{\small $1$}
  \psfrag{2}{\small $(-1,1)$}
   \psfrag{3}{\small $(1,1)$}
    \psfrag{4}{\small  $(1,\widehat{s_2})= P_{dS}(q)$}
     \psfrag{5}{\small  $(1,\widehat{s_1})= P_{dS}(p)$}
      \psfrag{6}{\small  $q=(\cosh t_2,\sinh t_2)$}
       \psfrag{7}{\small  $p=(\cosh t_1,\sinh t_1)$}
        \psfrag{8}{\small  $p_S^{-1}\circ p_{dS}(q)$}
         \psfrag{9}{\small  $p_S^{-1}\circ p_{dS}(p)$}
\includegraphics[width=1\linewidth]{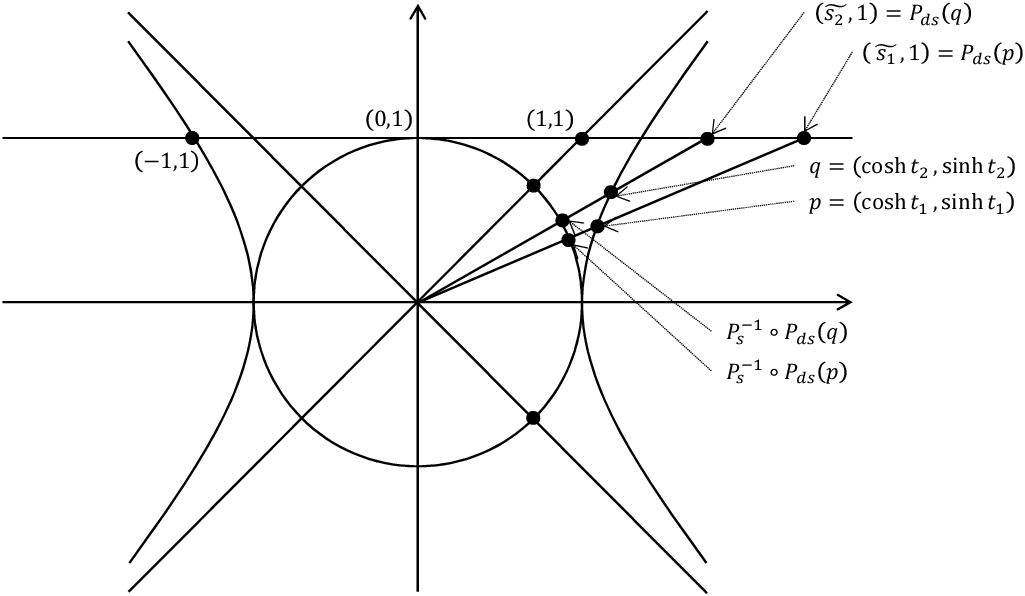}    \caption{\small {Projection from the origin of the coordinates onto the hyperplane at hight 1}}   \label{Sitter}  
\end{figure}

The intersection between the unit sphere ${\rm dS}^{n-1, 1}$ and the $x_0x_1$-plane in ${\mathbb R}$ is denoted by ${\rm dS}^{0, 1} \subset {\mathbb R}^{1,1}$. This is a totally geodesically  embedded submanifold and geometrically it is a hyperbola (see Figure \ref{Sitter}) diffeomorphic to $S^0 \times \mathbb{R}$.    By using an element 
of the orthogonal group $\mathrm{SO}(n, 1)$,  any pair $(p, q)$, with $q$ lying in the future of $p$  in 
$S^{n, 1}$  can be isometrically transposed to a pair of points on $\rm{dS}^{0,1}$ so that the $x_0$ coordinates of the points are 
positive.  Hence we may assume without loss of generality that  $p$ and  $q$ belong to a connected component of the upper hemisphere $ \mathbf {U}= \{(x_0, x_1)  | -x_0^2 + x_1^2 = 1, x_0 >0 \}$
of $\rm{dS}^{0,1}$ in ${\mathbb R}^{1,1}$.  

We introduce a parameterization $\sigma(t)$  of $\rm{dS}^{0,1}$, $t\in\mathbb{R}$, so that 
\[
(x_0, x_1) = (\sinh t, \cosh t).
\]
Note that $t$ is an arc-length parameter for the de Sitter metric, as the tangent vector to $\sigma(t) = (\sinh t, \cosh t)$ has norm 1.  Hence, for $p = \sigma(t_1)$ and $q= \sigma(t_2)$ with $t_1 < t_2$, the de Sitter distance $d(p, q)$ is equal to $t_2-t_1$.  Here the point $q$ lies in the future of $p$ in ${\rm dS}^{n-1, 1} $.

We now project, as pictured in Figure \ref{Sitter},  a part of the hyperboloid $\{(x_0, x_1) | -x_0^2+x_1^2=1, x_0 > 0 \}$ onto the hyperplane $\{x_0=1\}
$ along the rays from the origin of ${\mathbb R}^{1,1}$
\begin{equation}\label{e:P}
P_{\rm dS}:  \{-x_0^2+x_1^2=1\} \rightarrow \{x_0=1\}.
\end{equation}
Let $\widetilde{p} = (1, \widetilde{s_1})$ and $\widetilde{q} =(1, \widetilde{s_2})$ 
be the images of $p$ and $q$ by this correspondence, where  $\widetilde{s_1}>\widetilde{ s_2}$.  The asymptotic lines $x_0 = \pm x_1$ of the 
hyperboloid $\{-x_0^2+x_1^2=1\}$  are sent to the points $(1, 1)$ and $(1, -1)$.  The cross ratio of those four points defines the Hilbert geometry $H$ for the convex set $I = \{ x_0 > \pm x_1\}$  in the projective space 
${\mathbb R} P^1$, and for a pair of points $\widetilde{p}$ and $\widetilde{q}$ with $\widetilde{p} <  \widetilde{q}$, we have 
\[
H(\widetilde{p}, \widetilde{q}) = \frac12  \log \frac{\widetilde{s_1} -1}{\widetilde{s_2}-1} \cdot \frac{\widetilde{s_2}+1}{\widetilde{s_1}+1}.
\]
By noting the equality
\[
\widetilde{s_i} = \frac{\sinh t_i}{\cosh t_i},
\] 
the Hilbert distance $H(\widetilde{p}, \widetilde{q})$ is
equal to $(t_2-t_1)$.  Hence we have shown that $d(p,q) =  H(\widetilde{p}, \widetilde{q})$ for $p < q$.

  By post-composing the map $P_{\rm dS}$ with the map $P_{\rm S}
^{-1}: \{x_0 =1\} \rightarrow {\mathbb U}$ where 
${\mathbb U}$ is the upper hemisphere $\{ (x_0, x_1, \dots, x_n) \,\,|\,\, x_0^2 + \sum_{i=1}^n x_i^2 = 1, x_0 > 0\}$, the geodesic through $p$ and $q$ in the de Sitter space is identified with a great circle in the 
sphere, and the image of the map $P_{\rm S}^{-1} \circ P_{\rm dS}$ of the northern half of the de Sitter 
space is ${\mathbb U} \setminus B$ where $B$ is the northern cap bounded by the small circle of radius $
\pi/4$ (see Figure \ref{Sitter}).  This shows that the timelike geometry of the de Sitter space is realized by the timelike Hilbert 
metric modeled on the sphere.  The maps $P_{\rm dS}$ and $P_{\rm S}$ are {\it perspectivities}, namely, they preserve the cross ratio (see \cite{PY2}). We conclude that the de Sitter distance is equal to the timelike spherical Hilbert distance.

The quotient space of the de Sitter space is equipped with a {\it locally timelike} Hilbert geometry, where the 
quotient is taken by the ${\mathbb Z}_2$ antipodal symmetry of $\Omega = S^n \setminus (\overline{K_1} 
\cup -\overline{K_1})$, with $K_1$ a small circle of radius $\pi/4$ in $S^{n} $.  The timelike Hilbert geometry thus defined is only {\it local}, as the
space $\Omega = {\mathbb R}P^n \setminus \overline{K_1}$ is not time-orientable.  Namely consider the closed 
path from $p \in \Omega$ to itself, along the circle at infinity of ${\mathbb R}P^n$. Traversing the loop then reverses the 
orientation of the light cone  (cf. Hawking-Ellis \cite{HE}, Calabi-Marcus \cite{CM}).

 \medskip
 
 \emph{Acknowledgements} The first author is supported by the French ANR Grant FINSLER and by the  U.S. National Science Foundation grants DMS 1107452, 1107263, 1107367 ``RNMS: Geometric structures And Representation varieties" (the GEAR Network) and JSPS Invitation Fellowship (short-term) FY2017. He is also grateful to Gakushuin University (Tokyo), to the Graduate Center of the City University of New York and to the Tata Institute of Fundamental Research (Bombay) where part of this work was done. The second author is supported by JSPS KAKENHI 24340009, 16K13758 and 17H01091.  He is also grateful for IRMA (Strasbourg), where part of the work was done.
The authors would like to thank V. N. Berestovskii for his valuable comments on an earlier version.

\end{document}